\journal{arXiv}
\newtheorem{teo}{Theorem}
\newtheorem{prop}[teo]{Proposition}
\newtheorem{cor}[teo]{Corollary}
\newtheorem{lema}[teo]{Lemma}
\newdefinition{defs}[teo]{Definition}
\newdefinition{ejemplo}[teo]{Example}
\newdefinition{ejemplos}[teo]{Examples}
\newdefinition{obs}[teo]{Remark}
\newdefinition{parr}[teo]{}
\newdefinition{notc}[teo]{Notation}
\newproof{proof}{Proof}
\def\Z{\mathbb{Z}}
\def\C{\mathbb{C}}
\def\N{\mathbb{N}}
\def\PP{\mathbb{P}}
\def\G{\mathcal{G}}
\def\then{\Longrightarrow}
\def\iff{\Longleftrightarrow}
\def\mono{\hookrightarrow}
\def\epi{\twoheadrightarrow}
\def\fl#1{\stackrel{#1}{\longrightarrow}}
\def\endo{\text{end}}
\def\auto{\text{aut}}
\def\der{\text{der}}
\def\rk{\text{rk}}
\def\im{\text{im}}
\begin{document}

\begin{frontmatter}

\title{Examples of varieties of structures.}
\author[dm]{C\'esar Massri\corref{correspondencia}\fnref{finanaciado}}
\address[dm]{Departamento de Matem\'atica, FCEN, Universidad de Buenos Aires, Argentina}
\cortext[correspondencia]{Address for correspondence: Departamento de Matem\'atica, FCEN, Universidad de Buenos Aires, Argentina}
\fntext[finanaciado]{The author was fully supported by CONICET, Argentina}
\ead{cmassri@dm.uba.ar}

\begin{abstract}
In this article we will introduce, among others,
the variety of subcomplexes and the variety of maps between complexes of given rank.
Also, varieties of $\mathfrak{g}$-structure like $\mathfrak{g}$-Grassmannian, $\mathfrak{g}$-determinantal varieties
and finally the variety $\mathcal{DGLA}(E)$ of differential graded Lie algebra structures on $E$.
We will compute the dimensions of these varieties and also some relevant properties.
The motivation of this article is to give examples of moduli spaces relevant to deformation theory.
\end{abstract}

\begin{keyword}
Graded structures\sep Grassmannian\sep Determinantal variety\sep DGLA\sep Moduli spaces
\MSC[2010] 14M15.
\end{keyword}

\end{frontmatter}

\section*{Introduction.}

This article is related to the variety of complexes studied in \cite{2011arXiv1111.5514C}.
The variety of complexes, often called Buchsbaum-Eisenbud variety, is the main topic of several articles,
\cite{MR0314819}, \cite{MR0384817}, \cite{MR625334}, \cite{MR607117}, \cite{MR698481}, \cite{MR709861},
\cite{MR717615}, \cite{MR738917}, \cite{MR878472}, \cite{MR1020344}, \cite{MR1028732}, \cite{MR1119908},
\cite{MR1129131}, \cite{MR1783997}, \cite{MR1805470}, \cite{MR2774650}.\\

In \cite[\S 24]{MR0195995} there is a general construction. Let $V$ be a finite dimensional vector space over $\C$,
let $P_1,\ldots,P_m$ be homogeneous polynomial functions on $V$ and let $\mathcal{I}$ denote the homogeneous ideal
generated by $P_1,\ldots,P_m$ in the algebra $S^\star(V^\vee)$ of
all polynomial functions on $V$. Let $X\subseteq\PP V$ denote the projective variety associated to $\mathcal{I}$.
Let $G\subseteq GL(V)$ be the algebraic subgroup that leaves $\mathcal{I}$ stable. It follows that $G$ acts on $X$.
Denote by $\mathfrak{g}$ the Lie algebra of $G$. For each $x\in X$ we will define a structure of a complex $(d^0,d^1)$ in the
graded vector space $\mathfrak{g}\oplus V\oplus \C^{m}$. Let $d^0:\mathfrak{g}\rightarrow V$ be given by $d^0(a)=a.x$
and let $d^1:V\rightarrow\C^m$ be given by $d^1(v)=((dP_1)_x(v),\ldots,(dP_m)_x(v))$. It is an exercise to verify that
if $x\in X$ then $d^1d^0=0$, so we have defined a map from $X$ to the variety of complexes
on $\mathfrak{g}\oplus V\oplus \C^{m}$,
$$X\longrightarrow\{(d^0,d^1)\in\PP\hom(\mathfrak{g},V)\times\PP\hom(V,\C^m)\,|\,d^1d^0=0\}.$$
The homology of each complex has a geometric meaning. If $H^1(x)=0$, that is, if $\mathfrak{g}.x=T_xX$,
the point $x$ has an open orbit (it determines an irreducible component of $X$).
See \cite{MR0195995} for a proof and more references.\\

Let's see another interesting construction from
the theory of Differential Graded Lie Algebra (DGLA) in the context of
deformation theory. A Differential Graded Lie Algebra $(E,d,[-,-])$ is a graded vector space $E$ together with a differential $d$
of degree one and a graded bracket subject to some compatibilities,
\begin{itemize}
\item $[E^i,E^j]\subset E^{i+j}$, $[a,b] = -(-1)^{\overline{a}\overline{b}}[b,a]$
\item $[a,[b,c]]=[[a,b],c]+(-1)^{\overline{a}\overline{b}}[b,[a,c]]$
\item $d(E^i)\subset E^{i+1}$, $d^2=0$, $d[a,b]=[da,b]+(-1)^{\overline{a}}[a,db]$
\end{itemize}
Every DGLA $E$ comes with its Maurer-Cartan variety
$$M(E)=\{x\in E^1\,|\,dx+\frac{1}{2}[x,x]=0\}.$$
It is of interest to understand the local behavior of some $x\in M(E)$.
Note that if $x\in M(E)$ then $d+[x,-]$ is a new differential in the Graded Lie Algebra $(E,[-,-])$,
$$(d+[x,-])^2=d^2+[x,[x,-]]+d[x,-]+[x,d-]=$$
$$d^2+[x,[x,-]]+[dx,-]-[x,d-]+[x,d-]=\frac{1}{2}[[x,x],-]+[dx,-]=$$
$$[dx+\frac{1}{2}[x,x],-]=0.$$
Let $\mathcal{DGLA}(E)$ be the variety of DGLA structures on $E$ (we are assuming that $E$ is bounded)
and let $\mathcal{D}(E)$ be the
variety of complexes on the graded vector space $E$ then we have the following diagram,
$$M(E)\stackrel{\kappa}{\rightarrow}\mathcal{DGLA}(E)\stackrel{\pi}{\rightarrow}\mathcal{D}(E),$$
$$\kappa(x)=(d+[x,-],[-,-]),\quad \pi(\delta,[-,-])=\delta.$$
Again, we have a map to a variety of complexes but also, we have a map
to the mysterious variety $\mathcal{DGLA}(E)$.
From the definition of DGLA we know that $E^0$
is a Lie algebra and $E^i$ are $E^0$-modules, so to analyze $\mathcal{DGLA}(E)$ we would
need some auxiliaries varieties of structures.\\

Let's see another example. The theorem of Frobenius (\cite[2.32]{MR722297}) implies that every
differential ideal $\mathcal{I}\subseteq \Omega^\star_{\PP ^n}$ generated by $1$-forms
corresponds to a foliation in $\PP ^n$. The homogeneous polynomials of degree $e$ in $\mathcal{I}$
determines a subcomplex of the finite dimensional complex $\Omega^\star_{\PP ^n}(e)$ of forms with coefficients of degree $e$,
$$\mathcal{I}(e)\subseteq \Omega^\star_{\PP ^n}(e).$$
It would be necessary to understand the variety of subcomplexes of a given complex. In this article
we will do that.\\

This article is divided in two parts. In the first part (sections 1,2,3,4) we will work with
varieties of graded structures. In the second part (sections 5,6,7,8) we will work with
varieties of $\mathfrak{g}$-structures with $\mathfrak{g}$ a semisimple Lie algebra.
In the {\bf first section} we will give some preliminaries that we are going
to use in sections 2,3 and 4, like definitions, notations and some computations.
In the {\bf second, third and fourth sections} we will study the variety of subcomplexes of a complex,
the variety of maps of given rank and the variety of complexes of given dimensions.
The {\bf fourth section} is a review of \cite{2011arXiv1111.5514C}.
In the {\bf fifth section} we will give some definitions and notations to be used in
sections 6,7 and 8. In the {\bf sixth, seventh and eighth sections} we will study the
variety of maps of a given $\mathfrak{g}$-rank (we call it $\mathfrak{g}$-determinantal variety),
the variety of submodules of a given module (we call it $\mathfrak{g}$-Grassmannian) and
finally the variety $\mathcal{DGLA}(E)$ assuming $E=E^0\oplus E^1\oplus E^2$ with $E^0$ a
semisimple Lie algebra. All the proofs presented in this article uses standard techniques of
algebraic geometry, see \cite{MR1182558} for reference.

\section{Preliminaries on complexes.}
Given $(V,d)$ and $(W,\delta)$ two complexes, we will denote $\hom(V,W)$ the morphisms of complexes. It is a graduate vector space.
A morphism of degree $i$ is an element of $\hom^i(V,W)$. The group $GL(V)$ will
denote the automorphism of the complex $V$. An element in $GL(V)$ is an invertible endomorphism of $(V,d)$.
In this work we will assume that all the complexes are bounded and finite dimensional
$$W=\bigoplus_{i=0}^n W_i,\quad \dim W_i<\infty.$$

\begin{notc}
For $f\in\hom^0(V,L)$, let
$$\rk(f):=(\rk(f^0),\rk(f^1),\ldots,\rk(f^n))\in\N_0^{n+1}.$$

Also, let $\dim V:=\rk(1_V)$,
$$\dim V=(\dim(V^0),\dim(V^1),\ldots,\dim(V^n))\in\N_0^{n+1}.$$

For $r,s\in\N_0^{n+1}$ we define the order $s\leq r$ induced by $\N_0$,
$$s\leq r\iff s_i\leq r_i\quad 0\leq i\leq n.$$
\end{notc}

\begin{lema}\label{dim-hom^0}
Let $(V,d)$ and $(V',d')$ two complexes then $\hom^0(V,V')$
is a vector space of dimension
$$\dim\hom^0(V,V')=\sum_{i=0}^n h^ih'^{i}+\rk(d^i)\rk(d'^{i}),\quad h^i=\dim H^i(V),\,h'^{i}=\dim H^{i}(V').$$
\end{lema}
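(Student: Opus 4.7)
Proof plan. The strategy is to reduce the computation to ``elementary'' complexes via a splitting of each of $V$ and $V'$. Over a field, every bounded finite-dimensional cochain complex $(V,d)$ decomposes as a direct sum of summands of two types: copies of $\C$ placed in a single degree $i$ with zero differential, one for each basis vector of $H^i(V)$, and copies of the two-term acyclic complex $D_i=(\C\xrightarrow{1}\C)$ in degrees $i$ and $i+1$, one for each dimension of $\im d^i$. To obtain the splitting, for each $i$ I would pick a complement $R^i$ of $\ker d^i$ inside $V^i$ and a complement $C^i$ of $\im d^{i-1}$ inside $\ker d^i$, so that $V^i=\im d^{i-1}\oplus C^i\oplus R^i$ with $C^i\cong H^i(V)$, and $d^i$ vanishes on the first two summands while mapping $R^i$ isomorphically onto $\im d^i$. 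The analogous splitting is made for $V'$, yielding $V'\cong\bigoplus_i H^i(V')[-i]\oplus\bigoplus_i D_i^{\oplus\rk(d'^{i})}$.

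With these decompositions in hand, bilinearity of $\hom^0(-,-)$ over direct sums of complexes reduces the task to computing $\dim\hom^0$ between each pair of elementary pieces. A direct case check---writing a degree-zero graded map as a matrix and imposing the chain-map condition $d'f=fd$---yields a one-dimensional contribution in the matched cases $\C[-i]\to\C[-i]$ and $D_i\to D_i$. Summing over the multiplicities, the matched cohomology pairs produce the term $\sum_i h^ih'^{i}$ and the matched acyclic pairs produce the term $\sum_i\rk(d^i)\rk(d'^{i})$, which together give the claimed formula.

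The main obstacle I anticipate is controlling the off-diagonal elementary pairs, namely $\hom^0(\C[-i],D_j)$, $\hom^0(D_i,\C[-j])$, and $\hom^0(D_i,D_j)$ with $i\neq j$: for each one must analyze the commuting-square condition carefully in terms of the indices. This is routine bookkeeping, but demands attention since the two-term pieces $D_i$ straddle consecutive degrees, so shifted matchings can a priori arise and must be accounted for in the final summation to recover exactly the two terms of the stated formula.
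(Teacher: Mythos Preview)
Your decomposition strategy is essentially the one the paper uses: the paper splits $V^i=B^i\oplus H^i\oplus C^i$ (with $C^i\cong B^{i+1}$) and then argues that, once $f^i|_{B^i}$ is forced by $f^{i-1}$, the remaining freedom in $f^i$ is an element of $\hom(H^i,H'^{i})\times\hom(B^{i+1},B'^{i+1})$. Your elementary-piece decomposition is the same idea phrased externally, so methodologically you and the paper agree.

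The difficulty is that the step you flag as ``routine bookkeeping'' is precisely where the argument fails, and in fact the formula as stated is not correct for chain maps. Three of your off-diagonal elementary Hom spaces are one-dimensional, not zero:
\begin{itemize}
\item $\hom^0(D_i,\C[-i])$: the single component $f^i:\C\to\C$ is unconstrained, since the target differential vanishes and $f^{i+1}=0$;
\item $\hom^0(\C[-i],D_{i-1})$: the component landing in the top term $(D_{i-1})^i$ is unconstrained;
\item $\hom^0(D_i,D_{i-1})$: again the single overlap $f^i:\C\to\C$ satisfies the chain condition automatically.
\end{itemize}
Summed over multiplicities these contribute the additional terms $\sum_i \rk(d^i)h'^{i}$, $\sum_i h^i\rk(d'^{i-1})$ and $\sum_i \rk(d^i)\rk(d'^{i-1})$. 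A minimal counterexample: with $V=D_0$ and $V'=\C$ concentrated in degree $0$, every $h^ih'^{i}$ and every $\rk(d^i)\rk(d'^{i})$ vanishes, yet $\hom^0(V,V')$ is one-dimensional (any scalar $f^0$ is a chain map). Similarly $V=D_0\oplus\C[0]$ with $V'=V$ gives $\dim\hom^0=3$, whereas the stated formula yields $2$.

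The paper's own proof has exactly this gap: the assertion ``$f(H)\subseteq H'$'' is only true modulo $B'$, and the conclusion that $f^i$ is determined by an element of $\hom(H^i,H'^i)\times\hom(B^{i+1},B'^{i+1})$ ignores that a chain map is merely block \emph{upper}-triangular for the filtration $B\subset Z\subset V$, not block diagonal. The free blocks $H^i\to B'^i$, $C^i\to B'^i$ and $C^i\to H'^i$ are precisely the three extra families above. So your plan, carried out carefully, would not recover the two stated terms; it would instead correct the formula.
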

\begin{proof}
Let $Z^i:=\ker(d^i)$, $B^i:=\im(d^{i-1})$, $H^i:=Z^i/B^i$ and $C^i$ a complement of $Z^i$ in $V^i$, then
$$V^i=Z^i\oplus C^i=B^i\oplus H^i\oplus C^i.$$
Note that $C^i\cong B^{i+1}$ because $d|_{C}$ is injective.
Same for $V'$.\\

Let $f\in\hom^0(V,V')$ then $f(B)\subseteq B'$ and $f(H)\subseteq H'$, so we may suppose
$$B^i\oplus H^i\oplus B^{i+1}\fl{f^i}B'^{i}\oplus H'^{i}\oplus B'^{i+1}.$$
Given that $f^id^{i-1}=d'^{i-1}f^{i-1}$, we have that
$f^i|_{B^i}$ is determined by $f^{i-1}$, and then $f^i$ is defined by
an element of $\hom(H^i,H'^{i})\times \hom(B^{i+1},B'^{i+1})$. Finally,
$$\hom^0(V,V')\cong\bigoplus_{i=0}^n \hom(H^i,H'^{i})\times \hom(B^{i+1},B'^{i+1}).$$\qed
\end{proof}

\begin{cor}\label{dim-ker}
Let $L$ and $N$ be complexes such that $\dim L=\dim N=r$ and $\dim H(L)=\dim H(N)=s$, then
$$\dim\ker(d^i_L)=\dim\ker(d_N^i).$$
Even more, let $z^i:=\dim\ker(d^i)$ then
$$z^i=r_{i-1}-z^{i-1}+s_i.$$
\end{cor}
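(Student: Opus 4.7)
The plan is to derive the recurrence $z^i = r_{i-1} - z^{i-1} + s_i$ directly from rank–nullity and the definition of cohomology, and then observe that since the right-hand side depends only on $r$ and $s$ (plus the previous $z^{i-1}$), induction on $i$ forces $z^i$ to be the same for $L$ and $N$.

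More concretely, I would first apply rank–nullity to the differential $d^i : V^i \to V^{i+1}$ of either complex to obtain
\[
\rk(d^i) = r_i - z^i.
\]
Next, from the definition $H^i = \ker(d^i)/\im(d^{i-1})$ I would read off
\[
s_i = \dim H^i = z^i - \rk(d^{i-1}).
\]
Substituting the previous identity (shifted by one) into this gives
\[
s_i = z^i - (r_{i-1} - z^{i-1}),
\]
which rearranges to the desired recurrence $z^i = r_{i-1} - z^{i-1} + s_i$. For the base case one uses the convention that the complex starts at index $0$, so $\im(d^{-1}) = 0$ and therefore $z^0 = s_0$ (equivalently, set $r_{-1} = z^{-1} = 0$ in the recurrence).

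With the recurrence established, the first assertion is immediate: since $L$ and $N$ have the same $r$ and the same $s$, and $z^0$ is determined by $s_0$ alone, a straightforward induction on $i$ shows that $z^i$ for $L$ equals $z^i$ for $N$ at every index. There is no real obstacle here; the only care needed is handling the boundary index $i = 0$ correctly so that the recurrence is well-posed. I would not expect to invoke Lemma \ref{dim-hom^0}, since the statement is purely about a single complex and is really just a bookkeeping consequence of rank–nullity together with the definition of homology.
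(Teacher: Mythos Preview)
Your proposal is correct and follows essentially the same approach as the paper: both arguments rest on rank--nullity ($r_i = z^i + b^{i+1}$) together with $z^i = b^i + s_i$, and both start the induction from $z^0 = s_0$ using the convention that the complex begins in degree~$0$. The only cosmetic difference is order of presentation: you first isolate the recurrence $z^i = r_{i-1} - z^{i-1} + s_i$ and then deduce equality of the $z^i$'s for $L$ and $N$, whereas the paper proves $z^i(L)=z^i(N)$ directly by induction (via the identities $r_i = z^i + b^{i+1}$ and $z^{i+1} = b^{i+1} + s_{i+1}$) and leaves the recurrence implicit in the inductive step. Your remark that Lemma~\ref{dim-hom^0} is not needed is also accurate; the paper's own proof of this corollary is self-contained.
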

\begin{proof}
Let $Z^i=\ker(d^i)$, $B^i=\im(d^{i-1})$, $H^i=Z^i/B^i$ and
$C^i\cong B^{i+1}$ a complement of $Z^i$ in $V^i$.
The dimensions of these spaces are denoted with lower cases $z^i,b^i,h^i,c^i,v^i$.\\

Let proceed inductively. Given that the complexes start in degree zero, we have
$$z^0(L)=h^0(L)=h^0(N)=z^0(N).$$

Given that $\dim L^i=\dim N^i$ and $\dim L^i=z^i(L)+b^{i+1}(L)$
we have by inductive hypothesis $b^{i+1}(L)=b^{i+1}(N)$.
Given that $h^{i+1}(L)=h^{i+1}(N)$, we get
$$z^{i+1}(L)=z^{i+1}(N).$$\qed
\end{proof}

\begin{notc}
Let $\chi_i:\Z^{n+1}\fl{}\Z$ the $i$-partial characteristic,
$$\chi_i(t):=t_i-t_{i-1}+t_{i-2}-\ldots+(-1)^it_0,\quad t\in\Z^{n+1}$$
Resolving the recursion in \ref{dim-ker}, $z^i=r_{i-1}-z^{i-1}+s_i$, we have
$$z^i=\chi_{i-1}(r)+\chi_i(s),\quad r,s\in\N_0^{n+1}.$$
\end{notc}

\begin{prop}
Let $V$, $L$ and $N$ be complexes such that $\dim L=\dim N$ and $\dim H(L)=\dim H(N)$ then
$$\dim \hom^0(V,L)=\dim\hom^0(V,N).$$
\end{prop}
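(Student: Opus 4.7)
The plan is to invoke Lemma \ref{dim-hom^0} for both complexes $L$ and $N$ and then show that each summand in the resulting expression only depends on the data $\dim L = \dim N$ and $\dim H(L) = \dim H(N)$, so the two expressions coincide term by term.

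More concretely, writing $h^i = \dim H^i(V)$ and applying Lemma \ref{dim-hom^0} twice, I would obtain
$$\dim\hom^0(V,L) = \sum_{i=0}^n h^i\,\dim H^i(L) + \rk(d_V^i)\,\rk(d_L^i),$$
and the analogous formula with $N$ in place of $L$. The first summand matches term by term immediately from the hypothesis $\dim H^i(L) = \dim H^i(N)$.

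For the second summand, the key point is that $\rk(d_L^i) = \dim L^{i} - \dim\ker(d_L^i)$, and similarly for $N$. Here I would apply Corollary \ref{dim-ker}, which says precisely that $\dim\ker(d_L^i) = \dim\ker(d_N^i)$ whenever $\dim L = \dim N$ and $\dim H(L) = \dim H(N)$; combined with $\dim L^i = \dim N^i$, this gives $\rk(d_L^i) = \rk(d_N^i)$ for every $i$. Substituting back yields the desired equality.

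There is no real obstacle here: the statement is essentially a corollary of Lemma \ref{dim-hom^0} and Corollary \ref{dim-ker} bundled together. The only thing one must be careful about is that the formula in Lemma \ref{dim-hom^0} splits cleanly into a ``homology'' part and a ``rank'' part, and both parts are preserved under the hypotheses; the dependence on $V$ through $h^i$ and $\rk(d_V^i)$ is the same on both sides, so it plays no role.
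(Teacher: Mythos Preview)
Your proof is correct and follows exactly the same approach as the paper: invoke Corollary~\ref{dim-ker} to equate the kernel dimensions, deduce $\rk(d_L^i)=\rk(d_N^i)$ from $\dim L^i=\dim N^i$, and then read off the equality from the formula in Lemma~\ref{dim-hom^0}. The paper's write-up is terser (it uses the notation $b^{i+1}$ for $\rk(d^i)$), but the logic is identical.
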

\begin{proof}
From \ref{dim-ker} we know $z^i(L)=z^i(N)$.
Given that $\dim(L^i)=\dim(N^i)$ we have $b^{i+1}(L)=b^{i+1}(N)$ then
$$h^i(V)h^i(L)+b^{i+1}(V)b^{i+1}(L)=h^i(V)h^i(N)+b^{i+1}(V)b^{i+1}(N).$$
Using \ref{dim-hom^0} we get $\dim\hom^0(V,L)=\dim\hom^0(V,N)$.\qed
\end{proof}

\begin{cor}\label{euler}
Let $(V,d)$ be a complex then $\chi_n(\dim V)=\chi_n(\dim HV)$.
\end{cor}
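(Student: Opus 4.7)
The plan is to exploit the closed-form recursion $z^i = \chi_{i-1}(r) + \chi_i(s)$ (with $r = \dim V$, $s = \dim HV$) that was derived just after Corollary \ref{dim-ker}, evaluated at the top index $i = n$, together with the boundedness assumption on the complex.

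First, I would observe that since $V^i = 0$ for $i > n$, the differential $d^n$ lands in the zero space, so $\ker(d^n) = V^n$; equivalently $z^n = r_n$. This is the only place the boundedness hypothesis enters, and it is what makes the recursion collapse into an Euler-characteristic identity.

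Next, applying the formula $z^i = \chi_{i-1}(r) + \chi_i(s)$ at $i = n$ yields
\[
r_n \;=\; z^n \;=\; \chi_{n-1}(r) + \chi_n(s).
\]
On the other hand, unravelling the definition of the partial characteristic gives
\[
\chi_n(r) \;=\; r_n - r_{n-1} + r_{n-2} - \cdots + (-1)^n r_0 \;=\; r_n - \chi_{n-1}(r).
\]
Combining the two displays produces $\chi_n(r) = \chi_n(s)$, which is exactly the claim.

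There is essentially no obstacle here; all the real work was done in Corollary \ref{dim-ker} and the recursion that followed it. The only thing to be careful about is bookkeeping the sign convention in $\chi_n$ and remembering that the complex being bounded at degree $n$ forces $z^n = r_n$.
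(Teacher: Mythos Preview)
Your argument is correct. The key observation that boundedness forces $z^n=r_n$, combined with the closed-form $z^n=\chi_{n-1}(r)+\chi_n(s)$ already established after Corollary~\ref{dim-ker}, immediately gives $\chi_n(r)=r_n-\chi_{n-1}(r)=\chi_n(s)$.

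The paper, however, does \emph{not} invoke that recursion. Its proof is the direct telescoping computation: one writes $\dim V^i=\dim\im(d^{i-1})+\dim H^i+\dim\im(d^i)$ and observes that in the alternating sum the boundary terms $\dim\im(d^i)$ cancel in pairs, leaving only $\sum_i(-1)^i\dim H^i$. So the paper's argument is self-contained and does not rely on Corollary~\ref{dim-ker} at all, whereas yours treats the result as an immediate corollary of the machinery already built. Your route is arguably more economical in context (all the work was indeed done earlier), while the paper's route has the virtue of being the classical one-line Euler-characteristic proof that a reader would recognize without needing to trace back through the recursion.
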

\begin{proof}
$$\chi_n(\dim V)=\sum_{i=0}^n(-1)^i\dim V^i=\sum_{i=0}^n(-1)^i(\dim \im(d^{i-1})+\dim H^i+\dim V/\im(d^{i-1}))=$$
$$\sum_{i=0}^n(-1)^i(\dim \im(d^{i-1})+\dim H^i+\dim \im(d^i))=\sum_{i=0}^n(-1)^i\dim H^i(V)=\chi_n(\dim HV).$$\qed
\end{proof}

\bigskip

The group $GL(V)\times GL(W)$ acts on the left on $\hom(V,W)$ in the following way
$$(\phi,\psi).f:=\psi f \phi^{-1},\quad (\phi,\psi)\in GL(V)\times GL(W).$$
Given that $f$ is a map of complexes, $\delta f=fd$, its conjugate is also a map of complexes
$$\delta \psi f\phi^{-1}=\psi\delta  f\phi^{-1}=\psi fd \phi^{-1}=\psi f \phi^{-1}d.$$
Given that $V$ and $W$ are bounded and finite dimensional in each degree,
$GL(V)\times GL(W)$ is a Lie group with Lie algebra $\hom^0(V,V)\times \hom^0(W,W)$.\\

The group $GL(V)$ acts on $\hom(V,V)$ by conjugation, $\phi.f:=\phi f\phi^{-1}$.

\section{Variety of subcomplexes of a given complex.}

Let us fix a complex $(W,\delta)$, $W=\bigoplus_0^n W_i$, $\dim W_i<\infty$ and
let $\mathcal{G}=\mathcal{G}(W)$ the variety of all the subcomplexes of $W$, that is,
$L\in\mathcal{G}$ if and only if $L\subseteq W$ is a subcomplex of $W$.\\

For $r,s\in\N_0^{n+1}$ let
$\G_{r,s}\subseteq\text{Grass}(W^0,r_0)\times\ldots\times \text{Grass}(W^{n},r_n)$
be the subvariety of $\mathcal{G}$ consisting of all the subcomplexes $L$ such that $\dim L=r$ and $\dim H(L)=s$,
$$\G_{r,s}(W)=\G_{r,s}:=\{(L^0,L^{1},\ldots,L^{n})\,|\,\delta^{i}(L^{i})\subseteq L^{i+1},\,\dim L=r,\,\dim H(L)=s\}.$$

\begin{prop}
Let $w:=\dim W$ and $h:=\dim H(W)$.
Suppose $\G_{r,s}\neq\emptyset$ then it is irreducible, smooth and
$$\dim \G_{r,s}=\sum_{i=0}^n(h_i-s_i)s_i+(\chi_i(w-h)-\chi_i(r-s))\chi_i(r-s).$$
\end{prop}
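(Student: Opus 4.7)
The plan is to realize $\G_{r,s}$ as a single orbit of $GL(W)$, so that irreducibility and smoothness follow from homogeneous-space theory, and the dimension can be read off from the tangent space at a chosen point.

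The main step is transitivity: given $L, L' \in \G_{r,s}$, one constructs $\phi \in GL(W)$ with $\phi(L) = L'$. Fix the decomposition $W^i = B^i(W) \oplus H^i(W) \oplus C^i(W)$ from the proof of Lemma \ref{dim-hom^0}. For any subcomplex $L$, the inclusion $B^i(L) = \delta^{i-1}(L^{i-1}) \subseteq B^i(W)$ and the equality $Z^i(L) = L^i \cap Z^i(W)$ are automatic, and the dimensions $b^{i+1}(L) = \chi_i(r-s)$ and $z^i(L) = \chi_{i-1}(r) + \chi_i(s)$ are determined by $(r,s)$ via Corollary \ref{dim-ker}. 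Consequently the combinatorial type of $L$ inside the ambient filtration of $W$ depends only on $(r,s)$, and a degree-by-degree basis matching produces the required $\phi$.

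With transitivity in hand, $GL(W)$ is an open subvariety of the irreducible affine variety $\hom^0(W,W)$, hence connected and irreducible, so its orbit $\G_{r,s}$ is smooth and irreducible. For the dimension, identify $T_{L_0}\G_{r,s} = \hom^0(L_0, W/L_0)$ via the first-order deformations of $L_0$ as a subcomplex of $W$, and apply Lemma \ref{dim-hom^0}. Substituting $h^i(L_0) = s_i$, $b^{i+1}(L_0) = \chi_i(r-s)$, and (applying Corollary \ref{dim-ker} to $W/L_0$, whose invariants are $w-r$ and $h-s$) $h^i(W/L_0) = h_i - s_i$ and $b^{i+1}(W/L_0) = \chi_i(w-h) - \chi_i(r-s)$, yields exactly the claimed sum $\sum_i (h_i - s_i)s_i + (\chi_i(w-h) - \chi_i(r-s))\chi_i(r-s)$.

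The main obstacle is the transitivity argument: although $(r,s)$ pins down the dimensions of every piece of $L$ sitting inside the ambient decomposition of $W$, assembling a global chain automorphism requires a careful degree-by-degree matching, with particular attention to the fact that the natural map $H(L) \to H(W)$ need not be injective. A subsidiary technicality is justifying that $\dim H(W/L_0) = h - s$, which must be deduced from the long exact sequence associated to $0 \to L_0 \to W \to W/L_0 \to 0$ combined with the constancy of the numerical invariants along the orbit $\G_{r,s}$.
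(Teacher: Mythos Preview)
Your route is genuinely different from the paper's. The paper argues by induction on the length of the complex: it analyses the projection $\pi:\G_{r,s}\to\widehat\G_{\widehat r,\widehat s}$ that forgets $L^n$, identifies each fibre with a product of two ordinary Grassmannians, and reads off smoothness, irreducibility, and the dimension from that fibration. The orbit description you build on is something the paper records only \emph{after} its proof; it is not the engine of the argument there.

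More seriously, your approach rests on a claim that fails in general: $GL(W)$ need \emph{not} act transitively on $\G_{r,s}$. The pair $(r,s)=(\dim L,\dim H(L))$ does not determine the orbit; one also needs the rank of $H(L)\to H(W)$ (equivalently $\dim H(W/L)$), which is precisely the phenomenon you yourself flag when noting that this map can fail to be injective. Concretely, take $W=\C^2\to\C^2$ with $\delta^0=\mathrm{diag}(1,0)$, so $w=(2,2)$ and $h=(1,1)$. Then $\G_{(1,1),(1,1)}=\{(\langle e_2\rangle,\,L^1):L^1\in\PP(W^1)\}\cong\PP^1$, but every $\phi\in GL(W)$ satisfies $\phi^1(\langle e_1\rangle)=\langle e_1\rangle$, so the point $L^1=\langle e_1\rangle=B^1(W)$ is fixed and there are two orbits. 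Consequently irreducibility cannot be deduced from the orbit picture, the substitution $\dim H(W/L_0)=h-s$ is false on the small orbit, and your tangent identification $T_{L_0}\G_{r,s}=\hom^0(L_0,W/L_0)$ breaks down there as well: at $L_0=(\langle e_2\rangle,\langle e_1\rangle)$ one computes $\dim\hom^0(L_0,W/L_0)=2$ while $\dim\G_{(1,1),(1,1)}=1$. (In general $\hom^0(L_0,W/L_0)$ is the Zariski tangent space to the ambient variety $\G_r$ of \emph{all} dimension-$r$ subcomplexes, not to the stratum $\G_{r,s}$.) The paper's inductive fibration sidesteps all of this by never invoking transitivity inside the proof.
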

\begin{proof}
Consider the map that forgets the last coordinate $\G_{r,s}\fl{\pi}\widehat{\G}_{\widehat{r},\widehat{s}}$.
The fibers of this map $\pi^{-1}(L^0,\ldots,L^{n-1})$ are
$$\{L^n\subseteq W^{n}\,|\,\delta^{n-1}(L^{n-1})\subseteq
L^n,\,\dim(L^n)=r_n,\,\dim\ker(\delta^{n}|_{L^{n}})-\rk(\delta^{n-1}|_{L^{n-1}})=s_n\}\cong$$
$$\cong\text{Grass}(\ker\delta^n/\delta^{n-1}(L^{n-1}),s_n)\times\text{Grass}(W^{n}/\ker\delta^n,r_n-\rk(\delta^{n-1}|_{L^{n-1}})-s_n).$$
They are smooth and irreducible. Given that
$$r_n-\rk(\delta^{n-1}|_{L^{n-1}})-s_n=r_n-(\dim\ker(\delta^{n}|_{L^{n}})-s_n)-s_n=$$
$$r_n-\chi_{n-1}(r)-\chi_n(s)=\chi_n(r)-\chi_n(s)=\chi_n(r-s),$$
we have
$$\dim\pi^{-1}(L^0,\ldots,L^{n-1})=$$
$$(\dim H^n(W)-s_n)s_n+(\dim W^n-\dim\ker\delta^n-\chi_n(r-s))\chi_n(r-s)=$$
$$(h_n-s_n)s_n+(\chi_n(w-h)-\chi_n(r-s))\chi_n(r-s).$$
The result follows by induction.\qed
\end{proof}

\bigskip

The group $GL(W)$ acts on $\G(W)$.
We will say that $L\sim N$ if there exist $\phi\in GL(W)$ such that $\phi|_L:L\fl{}N$,
in other words, $L$ is related to $\phi(L)$.
The relation in well defined because $\phi$ is an invertible map of complexes and given that $\delta(L)\subseteq L$ we have
$$\delta\phi(L)=\phi\delta(L)\subseteq\phi(L)\then\phi.L\in\G(W).$$
This action preserve the variety $\G_{r,s}$,
$$\dim(L)=\dim(\phi.L),\quad \dim(H(L))=\dim(H(\phi.L)).$$
Given that the action is transitive on $\G_{r,s}$, it is an orbit.

\bigskip

Let's study the existence of some $L\subseteq W$ with $\dim L=r$ and $\dim H(L)=s$,
in other words, $\G_{r,s}(W)\neq\emptyset$.
\begin{prop}
Let $r\leq w:=\dim W$, $s\leq h:=\dim H(W)$, $h\leq w$ and $s\leq r$.
$$\G_{r,s}(W)\neq\emptyset\iff 0\leq\chi_i(r-s)\leq\chi_i(w-h).$$
\end{prop}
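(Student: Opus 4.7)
The plan is to prove both implications by exploiting the formula $z^i(L) = \chi_{i-1}(r)+\chi_i(s)$ from Corollary \ref{dim-ker}, which gives a clean expression for the rank of $\delta^i$ restricted to a subcomplex in terms of $r$ and $s$.

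For the forward direction, assume $L\in\G_{r,s}(W)$. Then $\rk(\delta^i|_{L^i}) = r_i - z^i(L) = r_i - \chi_{i-1}(r) - \chi_i(s) = \chi_i(r) - \chi_i(s) = \chi_i(r-s)$, and the same computation applied to $W$ yields $\rk(\delta^i|_{W^i}) = \chi_i(w-h)$. Since $\delta^i|_{L^i}$ is a restriction of $\delta^i|_{W^i}$ to a subspace, its rank cannot exceed that of the ambient map, giving $\chi_i(r-s)\le\chi_i(w-h)$. Non-negativity of $\chi_i(r-s)$ is automatic because it is a rank.

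For the reverse direction I would construct $L$ by hand using the splitting from the proof of Lemma \ref{dim-hom^0}. Fix a decomposition $W^i = B^i \oplus H^i \oplus C^i$ where $H^i$ is a chosen complement of $B^i$ in $Z^i$ and $C^i$ is a chosen complement of $Z^i$ in $W^i$, so that $\delta^i$ restricts to an isomorphism $C^i \fl{\sim} B^{i+1}$. Using $s_i \le h_i$, pick a subspace $H^i(L)\subseteq H^i$ of dimension $s_i$; using $\chi_i(r-s)\le\chi_i(w-h)=\dim C^i$, pick a subspace $C^i(L)\subseteq C^i$ of dimension $\chi_i(r-s)$. Define inductively $B^i(L):=\delta^{i-1}(C^{i-1}(L))\subseteq B^i$ and set $L^i:=B^i(L)\oplus H^i(L)\oplus C^i(L)\subseteq W^i$.

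It then remains to verify the three defining conditions. That $\delta^i(L^i)\subseteq L^{i+1}$ is immediate since $B^i(L)$ and $H^i(L)$ lie in $\ker\delta^i$ while $\delta^i(C^i(L)) = B^{i+1}(L)\subseteq L^{i+1}$ by construction. The dimension count $\dim L^i = \chi_{i-1}(r-s) + s_i + \chi_i(r-s) = r_i$ follows from the recursion defining $\chi_i$ (and uses $\chi_{i-1}(r-s)\ge 0$, which is part of the hypothesis). Finally $Z^i(L) = B^i(L)\oplus H^i(L)$ because $C^i(L)$ maps injectively under $\delta^i$, and $B^i(L)\cap H^i(L)=0$ since these sit in the distinct summands $B^i$ and $H^i$ of $W^i$; hence $\dim H^i(L) = s_i$ as required. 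The main (minor) obstacle is bookkeeping: one must set up the induction so that the dimensions of $B^i(L)$ produced from the previous step mesh with the prescribed $r_i$ and $s_i$, but the identity $r_i - s_i = \chi_i(r-s) + \chi_{i-1}(r-s)$ makes this automatic.
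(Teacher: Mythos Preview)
Your proposal is correct and follows essentially the same approach as the paper: the reverse direction builds $L$ inductively from the splitting $W^i=B^i\oplus H^i\oplus C^i$, choosing pieces of the prescribed dimensions exactly as the paper does. Your forward direction is more explicit than the paper's (which simply calls it ``obvious''): you identify $\chi_i(r-s)$ as $\rk(\delta^i|_{L^i})$ via Corollary~\ref{dim-ker} and bound it by the ambient rank $\chi_i(w-h)$, which is a clean way to justify that implication.
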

\begin{proof}
Write each $W^i=Z^i\oplus C^i$, where $Z^i\cong B^i\oplus H^i$ and $C^i\cong B^{i+1}$.
Let's start the induction.
Define $L^0=L_Z^0\oplus L_C^0$, where $L_Z^0\subseteq Z^0=H^0$ and $L_C^0\subseteq C^0$ with
$\dim L_Z^0=s_0$ and $\dim L_C^0=r_0-s_0$. This is possible because
$$0\leq r_0-s_0=\dim(L_C^0)\leq\dim C^0=w_0-h_0\then\dim(L^0)=r_0.$$

Suppose we have defined up to $L^i$. Given that $\delta^{i}|_{C^i}$ is injective
there will be in $L^{i+1}$ a copy of $L^i\cap C^i\cong L^{i+1}\cap B^{i+1}$.
By inductive hypothesis, $L$ is a complex of the required dimensions, then
$$\dim(L^{i+1}\cap B^{i+1})=\dim(L^i\cap C^i)=\dim(L^i)-\dim(Z^i\cap L^i)=r_i-\chi_{i-1}(r)-\chi_i(s)=\chi_i(r-s).$$
Let $L_H^{i+1}\subseteq H^{i+1}$ of dimension $s_{i+1}$, let $L_B^{i+1}=L^{i+1}\cap B^{i+1}$
and let $L_C^{i+1}\subseteq C^{i+1}$ of dimension
$$r_{i+1}-(\dim L_H^{i+1} + \dim L_B^{i+1})=r_{i+1}-s_{i+1}-\chi_i(r-s)=\chi_{i+1}(r-s).$$
This is possible because
$$0\leq\chi_{i+1}(r-s)=\dim L_C^{i+1}\leq \dim C^{i+1}=\chi_{i+1}(w-h)\then$$
$$L^{i+1}:=L_H^{i+1}\oplus L_B^{i+1}\oplus L_C^{i+1},\quad
\dim L^{i+1}=r_{i+1},\,\dim H^{i+1}(L)=s_{i+1}.$$
The other implication is obvious.\qed
\end{proof}

\section{Variety of maps between complexes of given rank.}

Let $V$ and $W$ be complexes,
$$V=\bigoplus_{i=0}^n V_i,\, W=\bigoplus_{i=0}^n W_i,\quad \dim V_i,\,\dim W_i<\infty.$$
The group $GL(V)\times GL(W)$ acts on $\hom^0(V,W)$,
$$\psi f\phi^{-1}=(\psi^0 f^0\phi^{0,-1},\psi^1 f^1\phi^{1,-1},\ldots,\psi^n f^n\phi^{n,-1})$$
where $f^{i+1}d^i=\delta^{i} f^i$, $\phi^{i+1}d^i=d^i \phi^i$, $\psi^{i+1}\delta^i=\delta^i \psi^i$,
$\phi^i\in GL(V^i)$ and $\psi^i\in GL(W^i)$.
Note that $\rk(f)$ is invariant under conjugation, $\rk(\psi f\phi^{-1})=\rk(f)$ and
if we denote $[f]$ the map induced on homology, $\rk([f])$ is also invariant
by this action, $\rk([\psi f\phi^{-1}])=\rk([f])$.
For each $r,s\in\N^{n+1}$ let
$$\mathcal{C}_{r,s}=\mathcal{C}_{r,s}(V,W):=\{f\in\hom^0(V,W)\,|\,\rk(f)=r,\,\rk([f])=s\}.$$
Then
$$\bigcup_{\substack{0\leq r\leq\dim W,\\0\leq s\leq\dim H(W)}}\mathcal{C}_{r,s}=\hom^0(V,W).$$

\begin{lema}
Let $f,g\in\hom^0(V,W)$ such that $\rk(f)=\rk(g)=r$ and $\rk([f])=\rk([g])=s$ then
$$\dim f^i(\ker d^i)=\chi_{i-1}(r)+\chi_{i}(s)=\dim g^i(\ker\delta^i).$$
\end{lema}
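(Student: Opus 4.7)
The plan is to reduce the statement to Corollary~\ref{dim-ker} applied to the image subcomplex $L:=\im f\subseteq W$. Since $f$ is a chain map, $L$ is a subcomplex of $W$ whose differential is the restriction of $\delta$, and by definition of $r$ we have $\dim L=r$. The inclusion $f^i(\ker d^i)\subseteq L^i\cap\ker\delta^i=\ker(\delta^i|_{L^i})$ is immediate from $\delta^i f^i=f^{i+1}d^i$; for the reverse inclusion I would argue that any $v\in V^i$ with $f^{i+1}(d^iv)=0$ can be adjusted by an element of $\ker f^i$ into $\ker d^i$, which is exactly the identity $d^i(\ker f^i)=\im d^i\cap\ker f^{i+1}$.

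Next I would identify $\dim H^i(L)=s_i$. The map $[f]$ factors through $L$ as $H^i(V)\to H^i(L)\to H^i(W)$, and for $f\in\mathcal{C}_{r,s}$ the first arrow is surjective (since $V\twoheadrightarrow L$ is a chain surjection and $f^i(B^i)$ exhausts $L^i\cap\im\delta^{i-1}$) while the second is injective (since a cycle of $L$ that bounds in $W$ already bounds in $L$), giving $\dim H^i(L)=\rk[f^i]=s_i$. Granting both identifications, Corollary~\ref{dim-ker} applied to $L$ yields
$$\dim f^i(\ker d^i)=\dim\ker(\delta^i|_{L^i})=\chi_{i-1}(\dim L)+\chi_i(\dim H(L))=\chi_{i-1}(r)+\chi_i(s),$$
and the identical calculation for $g$ gives the right-hand equality.

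The main obstacle I anticipate is justifying the two identifications $f^i(\ker d^i)=L^i\cap\ker\delta^i$ and $\dim H^i(L)=s_i$: neither is formal, and both rely on the chain-map condition in consecutive degrees combined with the rigidity imposed by fixing $(r,s)$. To handle them I would fix splittings $V^i=B^i\oplus H^i\oplus C^i$ and $W^i=B'^{i}\oplus H'^{i}\oplus C'^{i}$ as in the proof of Lemma~\ref{dim-hom^0}, write $f^i$ in block form (so that $f^i(B^i)\subseteq B'^{i}$, $f^i(H^i)\subseteq Z'^{i}$, and $f^i|_{B^i}$ is rigidly determined by $f^{i-1}|_{C^{i-1}}$ via the isomorphisms $d^{i-1}|_{C^{i-1}}$ and $\delta^{i-1}|_{C'^{i-1}}$), and read off the required rank identities degree by degree, inducting on $i$.
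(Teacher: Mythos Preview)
Your route is genuinely different from the paper's. The paper argues by a direct induction on $i$: from the two identities
\[
\rk(f^i)=\dim f^i(\ker d^i)+\rk(f^{i+1}d^i),\qquad
\dim f^{i+1}(\ker d^{i+1})=\rk(f^{i+1}d^i)+\rk[f^{i+1}],
\]
it extracts the two-term recursion $\dim f^i(\ker d^i)+\dim f^{i+1}(\ker d^{i+1})=r_i+s_{i+1}$ (with initial value $\dim f^0(\ker d^0)=s_0$) and solves it to obtain $\chi_{i-1}(r)+\chi_i(s)$. No passage to the image subcomplex is made.

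Your plan to set $L=\im f$ and invoke Corollary~\ref{dim-ker} has a genuine gap: both identifications you need are false in general, and the justifications you sketch do not hold. The identity $d^i(\ker f^i)=\im d^i\cap\ker f^{i+1}$ already fails for $V=\C\fl{1}\C$, $W=\C\fl{0}\C$, $f=(\mathrm{id},0)$: here $\ker f^0=0$ while $\im d^0\cap\ker f^1=\C$, so $f^0(\ker d^0)=0\subsetneq\C=L^0\cap\ker\delta^0$. Likewise, ``a cycle of $L$ that bounds in $W$ already bounds in $L$'' is false: take $V=0\oplus\C$, $W=\C\fl{1}\C$, $f=(0,\mathrm{id})$; every element of $L^1=\C$ bounds in $W$ but $L^0=0$, so $H^1(L)\to H^1(W)$ is not injective and $\dim H^1(L)=1\neq 0=s_1$. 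The ``rigidity imposed by fixing $(r,s)$'' does not rescue either claim, since each counterexample is a single map with definite rank data. Your fallback---fix splittings, write $f^i$ in block form, and induct on $i$---is essentially what the paper does; the detour through $L$ and Corollary~\ref{dim-ker} does not shorten the argument.
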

\begin{proof}
Given that $\rk(f^0|_{\ker d^0})=\rk([f^0])=\rk([g^0])=\rk(g^0|_{\ker\delta^0})$ we have
$$\dim(f^0(\ker d^0))=\dim(g^0(\ker\delta^0)).$$
The following applies
$$\rk(f^i)=\dim f^i(\ker d^i)+\rk(f^{i+1}d^i),\quad \rk(g^i)=\dim g^i(\ker \delta^i)+\rk(g^{i+1}\delta^i).$$
By inductive hypothesis, $\dim f^i(\ker d^i)=\dim g^i(\ker \delta^i)$, also $\rk(f^i)=\rk(g^i)$ then
$$\rk(f^{i+1}d^i)=\rk(g^{i+1} \delta^i).$$
Given that $\rk([f^{i+1}])=\rk([g^{i+1}])$ we finally get
$$\dim f^{i+1}(\ker d^{i+1})=\rk(f^{i+1}d^i)+\rk[f^{i+1}]=\rk(g^{i+1} \delta^i)+\rk[g^{i+1}]=\dim g^{i+1}(\ker\delta^{i+1}).$$
The formula for the dimension follows from the following recursion:
$$\rk f^i=\dim f^i(\ker d^i)+\rk(f^{i+1} d^{i})\then$$
$$\rk f^i+\rk [f^{i+1}]=(\dim f^i(\ker d^i)+\rk (f^{i+1} d^{i}))+\rk [f^{i+1}]\then$$
$$r_i+s_{i+1}=\dim f^i(\ker d^i)+(\rk (f^{i+1} d^{i})+\rk [f^{i+1}])\then$$
$$r_i+s_{i+1}=\dim f^i(\ker d^i)+\dim f^{i+1}(\ker d^{i+1}).$$\qed
\end{proof}

\begin{prop}
Let $v=\dim V$, $h^v=\dim H(V)$, $w=\dim W$ and $h^w=\dim H(W)$.
If $\mathcal{C}_{r,s}\neq\emptyset$ then $\mathcal{C}_{r,s}$ is irreducible, smooth and
$$\dim(\mathcal{C}_{r,s})=\sum_{i=0}^n(h^v_i+h^w_i-s_i)s_i+(\chi_i(v-h^v)+\chi_i(w-h^w)-\chi_i(r-s))\chi_i(r-s).$$
\end{prop}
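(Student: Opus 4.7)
\emph{The plan} is to exploit the action $(\phi,\psi).f=\psi f\phi^{-1}$ of $GL(V)\times GL(W)$ on $\hom^0(V,W)$ and to show that $\mathcal{C}_{r,s}$ is a single orbit. Since $GL(V)\times GL(W)$ is connected (being, after Lemma~\ref{dim-hom^0}, a product of general linear groups on the pieces $H^i$ and $B^{i+1}$), the orbit $\mathcal{C}_{r,s}$ would automatically be smooth and irreducible, reducing the proposition to the transitivity of the action and a dimension count via the stabilizer.

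For transitivity, given $f,g\in\mathcal{C}_{r,s}$ I would choose decompositions $V^i=B^i_V\oplus H^i_V\oplus C^i_V$ and $W^i=B^i_W\oplus H^i_W\oplus C^i_W$ as in the proof of Lemma~\ref{dim-hom^0}, and refine each summand of $V^i$ by intersecting with $\ker f^i$ (resp.\ $\ker g^i$) and choosing a complement, with a matching refinement on the $W$-side using the image of $f$ (resp.\ of $g$). The preceding lemma gives $\dim f^i(\ker d^i)=\chi_{i-1}(r)+\chi_i(s)=\dim g^i(\ker d^i)$; combined with $\rk f^i=\rk g^i=r_i$, with $\rk[f^i]=\rk[g^i]=s_i$, and with Corollary~\ref{dim-ker}, this pins down the dimension of every piece of the refined decomposition purely in terms of $v,w,h^v,h^w,r,s$. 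In bases adapted to these decompositions, $f$ and $g$ are represented by the same block matrix, and the change of basis furnishes $(\phi,\psi)\in GL(V)\times GL(W)$ with $\psi f=g\phi$.

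Once transitivity is established, $\dim\mathcal{C}_{r,s}=\dim GL(V)+\dim GL(W)-\dim\text{Stab}(f_0)$ for any $f_0\in\mathcal{C}_{r,s}$. The dimensions of $GL(V)$ and $GL(W)$ are read off from Lemma~\ref{dim-hom^0}. For the stabilizer, writing $A\in\hom^0(V,V)$ and $B\in\hom^0(W,W)$ in block form compatible with the adapted decomposition above and imposing the linear condition $Bf_0=f_0A$ reduces the count to a sum over degrees of free parameters in each block. I expect this to be the main obstacle: the block-wise accounting is not deep but is delicate, and one must invoke the preceding lemma and the identities $\rk d^i=\chi_i(v-h^v)$, $\rk\delta^i=\chi_i(w-h^w)$ repeatedly to rewrite the resulting sum in the form displayed in the statement---involving $\chi_i(v-h^v)$, $\chi_i(w-h^w)$, $\chi_i(r-s)$---rather than the auxiliary rank numbers $b^i,z^i,c^i$ coming from the adapted decomposition. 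An alternative, more in the style of the previous proposition on $\G_{r,s}$, would be to induct on $n$ via the forgetful map dropping $f^n$ and to describe its fibre as a Grassmannian-by-determinantal fibration, but then the same bookkeeping difficulty reappears in identifying the fibre dimension with the $i=n$ summand of the formula.
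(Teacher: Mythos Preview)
Your orbit-method approach is correct and would work, but the paper takes precisely the alternative you mention at the end: it inducts on $n$ via the forgetful map $\pi:\mathcal{C}_{r,s}\to\widehat{\mathcal{C}}_{\widehat{r},\widehat{s}}$ dropping $f^n$. The bookkeeping you anticipate there is in fact much lighter than in your stabilizer computation. Once $f^n|_{B^n_V}$ is forced by the relation $f^nd^{n-1}=\delta^{n-1}f^{n-1}$, the residual freedom in $f^n$ is parametrized by $\hom(H^n_V,H^n_W)\times\hom(C^n_V,C^n_W)$, and the two rank conditions $\rk f^n=r_n$, $\rk[f^n]=s_n$ decouple across this product: the rank on the $H$-factor must be $s_n$, and the rank on the $C$-factor must be $r_n-\rk(\delta^{n-1}f^{n-1})-s_n=\chi_n(r-s)$. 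So the fibre is a \emph{product of two ordinary determinantal varieties} (not a Grassmannian-by-determinantal fibration), whose dimension
\[
(h^v_n+h^w_n-s_n)s_n+(\chi_n(v-h^v)+\chi_n(w-h^w)-\chi_n(r-s))\chi_n(r-s)
\]
is already the $i=n$ summand of the formula---no further rewriting required. Your route trades this clean fibration picture for the conceptual benefit of obtaining irreducibility and smoothness for free from the orbit structure; the paper instead gets these inductively from the fibres and only afterward remarks that $\mathcal{C}_{r,s}$ is a $GL(V)\times GL(W)$-orbit.
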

\begin{proof}
Consider the map that forgets the last morphism
$$\pi:\mathcal{C}_{r,s}\epi\widehat{\mathcal{C}}_{\widehat{r},\widehat{s}}$$
Its fibers $\pi^{-1}(\widehat{f})=\pi^{-1}(f^0,f^1,\ldots,f^{n-1})$ are
$$\{f^n\in\hom(V^n,W^{n})\,|\,f^nd^{n-1}=\delta^{n-1}f^{n-1},\,\rk(f^n)=r_n,\rk([f^n])=s_n\}.$$
As usual write $V^n=B^n_V\oplus H^n_V\oplus C^n_V$ and $W^n=B^n_W\oplus H^n_W\oplus C^n_W$.
Note that
$$\{f^n\in\hom(V^n,W^{n})\,|\,f^nd^{n-1}=\delta^{n-1}f^{n-1}\}\cong\hom(H^n_V,H^n_W)\times\hom(C^n_V,C^n_W)$$
The rank of $f^n$ over $H^n_V$ must be $s_n$ and over $C^n_V$ must be
$$r_n-\rk(\delta^{n-1}f^{n-1})-s_n=\chi_n(r)-\chi_n(s)=\chi_n(r-s).$$
Then all the fibers $\pi^{-1}(\widehat{f})$ are isomorphic to
$$\{f_1\in\hom(H^n_V,H^n_W)\,|\,\rk(f_1)=s_n\}\times\{f_2\in\hom(C^n_V,C^n_W)\,|\,\rk(f_2)=\chi_n(r-s)\}.$$
They are irreducible, smooth and of dimension (\cite[proposición 12.2]{MR1182558})
$$(\dim H^n_V+\dim H^n_W-s_n)s_n+(\dim C^n_V+\dim C^n_W-\chi_n(r-s))\chi_n(r-s)=$$
$$(h^v_n+h^w_n-s_n)s_n+(\chi_n(v-h^v)+\chi_n(w-h^w)-\chi_n(r-s))\chi_n(r-s).$$
The result follows by induction.\qed
\end{proof}

\bigskip

Given that $GL(V)\times GL(W)$ acts transitively on
the irreducible, smooth varieties
$\mathcal{C}_{r,s}$, it follows that $\mathcal{C}_{r,s}$ are the orbits of the action.

\bigskip

\begin{prop}
Let $w:=\dim W$ and $h_w:=\dim H(W)$.
Assume $\mathcal{C}_{r,s}\neq\emptyset$ then
$$\overline{\mathcal{C}_{r,s}}=\bigcup_{(u,t)\leq(s,r)}\mathcal{C}_{t,u}=\{f\in\hom^0(V,W)\,|\,\rk(f)\leq r,\,rk([f])\leq s\}.$$
\end{prop}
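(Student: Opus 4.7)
The plan is to establish both equalities. The second one,
\[\bigcup_{(u,t)\leq(s,r)}\mathcal{C}_{t,u}=\{f\in\hom^0(V,W)\,:\,\rk(f)\leq r,\,\rk([f])\leq s\},\]
is tautological: every $f$ lies in the unique stratum $\mathcal{C}_{\rk(f),\rk([f])}$, and the two conditions on the right translate verbatim into $(\rk([f]),\rk(f))\leq(s,r)$.

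For the first equality I would prove two inclusions. The containment $\overline{\mathcal{C}_{r,s}}\subseteq\{f\,:\,\rk(f)\leq r,\,\rk([f])\leq s\}$ reduces to verifying that the right-hand side is Zariski closed. Each $\rk(f^i)\leq r_i$ is determinantal and hence closed. For $\rk([f^i])\leq s_i$, observe that the homology spaces $H^i(V)=\ker d^i/\im d^{i-1}$ and $H^i(W)=\ker\delta^i/\im\delta^{i-1}$ are fixed independently of $f$, while the assignment $f\mapsto [f^i]\in\hom(H^i(V),H^i(W))$ is a linear map out of $\hom^0(V,W)$ (since the chain-map condition preserves cycles and boundaries). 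Hence $\{\rk([f^i])\leq s_i\}$ is the preimage of a determinantal variety under a linear map, and so is again closed.

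The reverse inclusion $\bigcup_{(u,t)\leq(s,r)}\mathcal{C}_{t,u}\subseteq \overline{\mathcal{C}_{r,s}}$ is the substantive step. By the preceding proposition each non-empty $\mathcal{C}_{t,u}$ is a single $GL(V)\times GL(W)$-orbit, and $\overline{\mathcal{C}_{r,s}}$ is visibly invariant under this group, so it suffices to produce a single point of each non-empty stratum $\mathcal{C}_{t,u}$ with $(u,t)\leq(s,r)$ in the closure. I would argue by induction on $\sum_i(r_i-t_i)+\sum_i(s_i-u_i)$, reducing to the elementary one-step deformations of the form $(r,s)\to(r-e_j,s)$ and $(r,s)\to(r,s-e_j)$. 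For such a step, fix decompositions $V^i=B^i_V\oplus H^i_V\oplus C^i_V$ and $W^i=B^i_W\oplus H^i_W\oplus C^i_W$ in which $d^i|_{C^i_V}$ and $\delta^i|_{C^i_W}$ restrict to canonical isomorphisms onto the next boundaries. Starting from a representative $f$ of the smaller stratum in block-diagonal form, construct a chain map $g$ supported on a single pair $v\in V^j\setminus B^j_V$ with $g^j(v)\in \ker\delta^j$ (and other $g^i=0$): taking $g^j(v)\in B^j_W$ bumps $\rk(f^j)$ by one without changing $\rk([f^j])$, while taking $g^j(v)\in H^j_W$ combined with a compensating change of an existing vector of the image that sits in $B^j_W$ allows bumping $\rk([f^j])$ at fixed $\rk(f^j)$. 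The chain-map conditions $\delta^i(f+\epsilon g)^i=(f+\epsilon g)^{i+1}d^i$ reduce to $g^j(v)\in\ker\delta^j$ and $v\notin B^j_V$, both built into the construction. Then $f_\epsilon:=f+\epsilon g\in\mathcal{C}_{r,s}$ for $\epsilon\neq 0$ and $\lim_{\epsilon\to 0}f_\epsilon=f$, placing $f$ in $\overline{\mathcal{C}_{r,s}}$.

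The main obstacle is this elementary deformation: one must verify that the chain-map equations hold identically in $\epsilon$, that the perturbation changes exactly the prescribed block rank by one (and not more), and that the non-emptiness of both strata guarantees enough room in the block decomposition to find the required $v\in V^j\setminus B^j_V$ and target vector in the appropriate block of $W^j$. The bookkeeping is parallel to the block-by-block construction used for $\mathcal{G}_{r,s}$ in the previous section, and once each elementary step is established, iterating along a decreasing path from $(r,s)$ to $(t,u)$ through non-empty strata completes the argument.
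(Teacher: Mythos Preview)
Your approach differs substantially from the paper's. The paper does not build explicit one-parameter degenerations at all: instead it introduces the incidence variety
\[
\mathcal{I}_{r,s}=\{(L,f)\mid \im(f)\subseteq L\}\subseteq \G_{r,s}(W)\times\hom^0(V,W),
\]
observes that the fibres of the first projection are the linear spaces $\hom^0(V,L)$ (irreducible of constant dimension), hence $\mathcal{I}_{r,s}$ and its image under the second projection are irreducible, and then checks via the criterion for $\G_{r-t,s-u}(W/\im f)\neq\emptyset$ that this image is exactly $E_{r,s}$. Since $\mathcal{C}_{r,s}$ is open and nonempty in the irreducible closed set $E_{r,s}$, it is dense, and the equality follows. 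This avoids any degree-by-degree deformation.

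Your direct-deformation strategy is reasonable in spirit but has two genuine gaps as written. First, the ``elementary'' perturbation $g$ supported in a single degree does not behave as you claim. In the block form
\[
f^j=\begin{pmatrix}f^j_{BB}&0&0\\0&f^j_{HH}&0\\0&0&f^j_{CC}\end{pmatrix},
\]
a rank-one $g^j$ with image in $B^j_W$ increases $\rk(f^j)$ only if its source lies in $\ker f^j$ \emph{and} its target lies outside $\im f^j_{BB}$; the latter requires $\chi_{j-1}(t-u)<\chi_{j-1}(w-h^w)$, which is \emph{not} implied by the nonemptiness of $\mathcal{C}_{t+e_j,u}$. Moreover, from the formula $\rk(f^i_{CC})=\chi_i(r-s)$ one sees that moving from $(t,u)$ to $(t+e_j,u)$ alters $\chi_i(r-s)$ for every $i\geq j$, so a block-diagonal representative of the larger stratum differs from $f$ in \emph{all} degrees $\geq j$, not just degree $j$; your ``compensating change'' for the $s$-bump is therefore not a local adjustment but a cascade. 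Second, even granting the elementary steps, you must exhibit a decreasing path from $(r,s)$ to $(t,u)$ passing only through nonempty strata; the admissibility condition $0\leq\chi_i(r-s)\leq\min(\chi_i(v-h^v),\chi_i(w-h^w))$ makes this a nontrivial lattice-path problem which you do not address.

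The incidence-variety argument sidesteps both issues at once, which is why the paper's route is considerably shorter.
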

\begin{proof}
Let
$$E_{r,s}:=\bigcup_{(u,t)\leq(s,r)}\mathcal{C}_{t,u}.$$
Being close we have $\overline{\mathcal{C}_{r,s}}\subseteq E_{r,s}$. Given that
$\mathcal{C}_{r,s}$ is dense in $E_{r,s}$, to get the equality we will see that
$E_{r,s}$ is irreducible. Let
$$\mathcal{I}_{r,s}:=\{(L,f)\,|\,\im(f)\subseteq L\}\subseteq \G_{r,s}(W)\times\hom^0(V,W).$$
The fibers of the first projection $\pi_1^{-1}(L)\cong\hom^0(V,L)$ are irreducible of the same dimension,
then $\mathcal{I}_{r,s}$ and $\im(\pi_2)$ are irreducible. Let's see that $\im(\pi_2)=E_{r,s}$,
let $f\in\mathcal{C}_{t,u}$, $(t,u)\leq (r,s)$ then
$$\exists L\in \G_{r,s}(W)\,|\,\im(f)\subseteq L\iff\G_{r-t,s-u}(W/\im(f))\neq\emptyset\iff$$
$$\chi_i( (r-t)-(s-u) )\leq\chi_i( (w-t)-(h_w-u) )\iff$$
$$\chi_i(r-s)\leq\chi_i(w-h_w)\iff\G_{r,s}(W)\neq\emptyset.$$
Given that $\mathcal{C}_{r,s}\neq\emptyset$, we get $\G_{r,s}(W)\neq\emptyset$.
Note that we recover the formula
$$\dim(\mathcal{C}_{r,s})=\dim(E_{r,s})=\dim(\G_{r,s}(W))+\dim(\hom^0(V,L)).$$\qed
\end{proof}

\begin{cor}
$\mathcal{C}_{t,u}\subseteq\overline{\mathcal{C}_{r,s}}\iff (u,t)\leq (s,r)$.\qed
\end{cor}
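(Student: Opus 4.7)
The plan is to deduce this corollary directly from the previous proposition, which asserts
$$\overline{\mathcal{C}_{r,s}}=\bigcup_{(u,t)\leq(s,r)}\mathcal{C}_{t,u}=\{f\in\hom^0(V,W)\,|\,\rk(f)\leq r,\,\rk([f])\leq s\}.$$
So there is really nothing new to prove; the only work is to extract the biconditional from these two equalities.

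For the implication $(\Leftarrow)$, suppose $(u,t)\leq(s,r)$. Then $\mathcal{C}_{t,u}$ appears explicitly as one of the pieces in the union $\bigcup_{(u',t')\leq(s,r)}\mathcal{C}_{t',u'}$ that equals $\overline{\mathcal{C}_{r,s}}$, so $\mathcal{C}_{t,u}\subseteq\overline{\mathcal{C}_{r,s}}$.

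For the implication $(\Rightarrow)$, assume $\mathcal{C}_{t,u}\neq\emptyset$ (otherwise the statement is vacuous and the inequality need not hold), and pick any $f\in\mathcal{C}_{t,u}$. Since $\mathcal{C}_{t,u}\subseteq\overline{\mathcal{C}_{r,s}}$, the set-theoretic description of $\overline{\mathcal{C}_{r,s}}$ from the proposition gives $\rk(f)\leq r$ and $\rk([f])\leq s$. By the very definition of $\mathcal{C}_{t,u}$ this means $t=\rk(f)\leq r$ and $u=\rk([f])\leq s$, that is, $(u,t)\leq(s,r)$.

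There is no real obstacle: the content of the corollary was already absorbed into the previous proposition, and the only mildly delicate point is noting that the converse direction requires $\mathcal{C}_{t,u}$ to be nonempty so that one can actually exhibit a map $f$ whose invariants realize $(t,u)$.
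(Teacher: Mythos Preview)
Your proposal is correct and matches the paper's approach: the paper marks the corollary with a \qed\ immediately after the statement, treating it as an immediate consequence of the preceding proposition, and your argument simply spells out this deduction. Your remark about needing $\mathcal{C}_{t,u}\neq\emptyset$ for the $(\Rightarrow)$ direction is a reasonable clarification that the paper leaves implicit.
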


\begin{cor}
$$\overline{\mathcal{C}_{t,u}}\bigcap\overline{\mathcal{C}_{r,s}}=
\overline{\mathcal{C}_{\min(t,r),\min(u,s)}}$$
where $\min(a,b):=(\min(a_0,b_0),\ldots,\min(a_n,b_n))$.\qed
\end{cor}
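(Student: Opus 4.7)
The plan is to obtain this corollary as a direct formal consequence of the preceding proposition, which describes each orbit closure as the locus of maps cut out by componentwise rank inequalities. There is essentially no new geometric content: the result reduces to the trivial observation that two systems of componentwise upper bounds combine into a single system via componentwise minima.

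Concretely, first I would apply the preceding proposition twice, rewriting
\[
\overline{\mathcal{C}_{r,s}} = \{f \in \hom^0(V,W) \mid \rk(f) \leq r,\ \rk([f]) \leq s\}
\]
and the analogous identity for $(t,u)$. Second, since the partial order on $\N_0^{n+1}$ is componentwise, a map $f$ simultaneously satisfies $\rk(f) \leq r$ and $\rk(f) \leq t$ if and only if $\rk(f) \leq \min(r,t)$, and the same reasoning applies to the homology ranks; hence
\[
\overline{\mathcal{C}_{r,s}} \cap \overline{\mathcal{C}_{t,u}} = \{f \mid \rk(f) \leq \min(r,t),\ \rk([f]) \leq \min(s,u)\}.
\]
Third, I would read the same proposition backwards to identify this right-hand side with $\overline{\mathcal{C}_{\min(r,t),\min(s,u)}}$.

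The one place where I would expect even a minor obstacle is that this last step presupposes $\mathcal{C}_{\min(r,t),\min(s,u)}$ to be nonempty, so that the preceding proposition actually applies to it. This is taken care of by the stratification $\overline{\mathcal{C}_{r,s}} = \bigcup_{(b,a) \leq (s,r)} \mathcal{C}_{a,b}$ furnished by that proposition: since $(\min(s,u),\min(r,t)) \leq (s,r)$, the stratum in question already appears in the decomposition of $\overline{\mathcal{C}_{r,s}}$ under the standing hypothesis $\mathcal{C}_{r,s} \neq \emptyset$, and is therefore nonempty. Apart from this bookkeeping, the argument is entirely formal and fits in a handful of lines, which is presumably why the paper marks the corollary with $\qed$.
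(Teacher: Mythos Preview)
Your overall approach is exactly what the paper intends: the corollary is marked with a bare \qed\ precisely because the rank-locus description
\[
\overline{\mathcal{C}_{r,s}}=\{f\,|\,\rk(f)\leq r,\ \rk([f])\leq s\}
\]
from the preceding proposition makes the intersection identity a one-line formality about componentwise minima.

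There is, however, a slip in the one place you anticipated trouble. Your argument that $\mathcal{C}_{\min(r,t),\min(s,u)}\neq\emptyset$ does not hold up: the stratification $\overline{\mathcal{C}_{r,s}}=\bigcup_{(b,a)\leq(s,r)}\mathcal{C}_{a,b}$ ranges over \emph{all} index pairs below $(s,r)$, and many of those strata are empty --- belonging to the index set of the union says nothing about being inhabited. (Compare with Proposition~\ref{comp_en_hom}, whose nonemptiness criterion involves the alternating sums $\chi_i$, which are not monotone in the componentwise order.) The paper does not address this point either, so your write-up is no worse off than the original; but the bookkeeping you flagged is not actually discharged by the sentence you wrote.
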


\bigskip

\begin{prop}\label{comp_en_hom}
Let $v=\dim V,\,w=\dim W,\,h^v=\dim H(V),\,h^w=\dim H(W)$.
Let $r$ and $s$ such that $r\leq \min(v,w)$, $s\leq \min(h^v,h^w)$ and $s\leq r$.
$$\mathcal{C}_{r,s}\neq\emptyset\iff 0\leq\chi_i(r-s)\leq \min(\chi_i(v-h^v),\chi_i(w-h^w))$$
\end{prop}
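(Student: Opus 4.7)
The plan is to reduce the existence of $f\in\mathcal{C}_{r,s}$ to two simpler questions: the existence of a subcomplex $L\subseteq W$ with the prescribed dimensions, and the existence of a surjection $V\twoheadrightarrow L$ that is surjective on homology. The first is already answered by the preceding proposition on $\G_{r,s}(W)$, and the second can be attacked directly via the $B\oplus H\oplus C$ decomposition introduced in Section 1.

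For $(\Rightarrow)$, I would set $L:=\im(f)\subseteq W$, a subcomplex with $\dim L=\rk(f)=r$; the factorization $H(V)\to H(L)\to H(W)$ of $[f]$, whose composite has rank $s=\dim H(L)$, identifies $L\in\G_{r,s}(W)$. Applying the existence criterion for $\G_{r,s}(W)$ to $L$ then gives $0\leq\chi_i(r-s)\leq\chi_i(w-h^w)$. For the remaining bound, the surjection $f^i:V^i\twoheadrightarrow L^i$ descends to a surjection $V^i/\ker(d^i)\twoheadrightarrow L^i/\ker(\delta^i|_{L^i})$, whose dimensions are $\chi_i(v-h^v)$ and $\chi_i(r-s)$ by Corollary \ref{dim-ker}; comparing dimensions gives $\chi_i(r-s)\leq\chi_i(v-h^v)$.

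For $(\Leftarrow)$, using $\chi_i(r-s)\leq\chi_i(w-h^w)$ I would first invoke the $\G_{r,s}(W)$ existence result to pick a subcomplex $L\subseteq W$ with $\dim L=r$ and $\dim H(L)=s$. I would then build a surjection $g:V\twoheadrightarrow L$ of complexes with $[g]$ surjective on homology, recursively on the degree. Fixing decompositions $V^i=B^i_V\oplus H^i_V\oplus C^i_V$ and $L^i=B^i_L\oplus H^i_L\oplus C^i_L$, at step $i$ I would pick a surjection $H^i_V\twoheadrightarrow H^i_L$ (possible since $s_i\leq h^v_i$) and a surjection $C^i_V\twoheadrightarrow C^i_L$ (possible since $\chi_i(v-h^v)\geq\chi_i(r-s)$), and let $g^{i+1}|_{B^{i+1}_V}$ be determined by the chain map identity $g^{i+1}d^i=\delta^i g^i$ on $C^i_V$. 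Composing $V\stackrel{g}{\twoheadrightarrow}L\hookrightarrow W$ then yields an element of $\mathcal{C}_{r,s}$.

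The main obstacle is the consistency of this recursive construction: the forced piece $g^{i+1}|_{B^{i+1}_V}$ must land in $B^{i+1}_L$ so that the block decomposition is preserved, and the assembled $g^{i+1}$ must be surjective onto $L^{i+1}$ with the induced map on homology surjective. Both conditions follow from the fact that $d^i|_{C^i_V}:C^i_V\to B^{i+1}_V$ and $\delta^i|_{C^i_L}:C^i_L\to B^{i+1}_L$ are isomorphisms, so the chosen surjectivity of $g^i|_{C^i_V}$ on the $C$-block automatically produces surjectivity of $g^{i+1}|_{B^{i+1}_V}$ on the $B$-block, and the free choices on $H^{i+1}_V$ and $C^{i+1}_V$ do not interfere with the chain map condition.
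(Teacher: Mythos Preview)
Your $(\Leftarrow)$ direction is essentially the paper's own argument. The paper builds $f$ directly as a block map with respect to the splittings $V^i=B^i_V\oplus H^i_V\oplus C^i_V$ and $W^i=B^i_W\oplus H^i_W\oplus C^i_W$, choosing $f^i_H:H^i_V\to H^i_W$ of rank $s_i$ and $f^i_C:C^i_V\to C^i_W$ of rank $\chi_i(r-s)$, with $f^{i+1}_B$ forced by the chain condition. You perform exactly the same recursion, only you first name the image $L\subseteq W$ via the $\G_{r,s}(W)$ criterion and then surject onto it. One point you skip: for the composite $V\twoheadrightarrow L\hookrightarrow W$ to lie in $\mathcal{C}_{r,s}$ you also need $H(L)\to H(W)$ to be injective; this holds for the particular $L$ produced in the proof of the $\G_{r,s}(W)$ criterion (there $H^i(L)$ is identified with the chosen $L^i_H\subseteq H^i_W$), but it is not automatic for an arbitrary element of $\G_{r,s}(W)$, so you should say which $L$ you are using.

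Your $(\Rightarrow)$ direction has a genuine gap. The assertion that $L:=\im(f)$ satisfies $\dim H(L)=s$ is false: in the factorization $H(V)\to H(L)\to H(W)$ neither arrow is controlled, since a surjection of complexes need not surject on homology and an inclusion of a subcomplex need not inject on homology. For instance, take $V=(\C\xrightarrow{\,1\,}\C)$, $W=(\C\xrightarrow{\,0\,}\C)$ and $f^0=\mathrm{id}$, $f^1=0$; then $r=(1,0)$ and $s=(0,0)$, yet $L=\im(f)=(\C\to 0)$ has $\dim H(L)=(1,0)\neq s$. Both of your bounds on $\chi_i(r-s)$ pass through the identification $\dim H(L)=s$ (the $W$-side via the $\G_{r,s}(W)$ criterion, the $V$-side via Corollary~\ref{dim-ker} applied to $L$), so the whole forward implication collapses once that identification fails. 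The paper, for its part, simply declares this direction ``obvious'' and does not argue through $\im(f)$.
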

\begin{proof}
As usual write $V^i=Z^i_V\oplus C^i_V$, $W^i=Z^i_W\oplus C^i_W$, $Z^i_V=B^i_V\oplus H^i_V$, $Z^i_W=B^i_W\oplus H^i_W$.
Let's proceed by induction. Let $f^0=f_Z^0\oplus f_C^0$ of rank $s_0$ and $r_0-s_0$ respectively,
$$H^0_V=Z^0_V\fl{f_Z^0}Z^0_W=H^0_W,\quad C^0_V\fl{f_C^0}C^0_W.$$
This is possible because
$$0\leq r_0-s_0=\rk(f^0_C)\leq\min(\dim C^0_V,\dim C^0_W)\then \rk(f^0)=r_0.$$
Suppose we have defined $f^i$.
Given that $d^i|_{C^i_V}$ and $\delta^i|_{C^i_W}$ are injective, there will be in $f^{i+1}$
the component $f^i_C$ of rank $\rk(f^i)-\rk(f^i|_{Z^i_V})$. Let's call it $f^{i+1}_B$.
By inductive hypothesis we have
$$\rk(f^{i+1}_B)=\rk(f^i)-\rk(f^i|_{Z^i})=r_i-\chi_{i-1}(r)-\chi_i(s)=\chi_i(r)-\chi_i(s).$$
Let $f_H^{i+1}$ of rank $s_{i+1}$ and let $f_C^{i+1}$ of rank
$$r_{i+1}-s_{i+1}-(\chi_i(r)-\chi_i(s))=\chi_{i+1}(r)-\chi_{i+1}(s).$$
This is possible because
$$0\leq\chi_{i+1}(r)-\chi_{i+1}(s)\leq \min(\dim C^{i+1}_V,\dim C^{i+1}_W)=\min(\chi_{i+1}(v-h^v),\chi_{i+1}(w-h^w)).$$
Then $\rk(f^{i+1})=r_{i+1}$ and $\rk([f]^{i+1})=s_{i+1}$. The other implication is obvious.\qed
\end{proof}

\begin{cor}
Let $f\in\hom^0(V,W)$ then
$$\chi_n(\rk(f))=\chi_n(\rk([f])).$$
\end{cor}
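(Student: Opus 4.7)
The plan is to read off the claim directly from Proposition \ref{comp_en_hom} and Corollary \ref{euler}. Set $r := \rk(f)$ and $s := \rk([f])$. By construction $f \in \mathcal{C}_{r,s}$, so $\mathcal{C}_{r,s} \neq \emptyset$, and Proposition \ref{comp_en_hom} then furnishes the chain of inequalities
$$0 \leq \chi_i(r-s) \leq \min(\chi_i(v-h^v), \chi_i(w-h^w))$$
for every $0 \leq i \leq n$, where $v = \dim V$, $w = \dim W$, $h^v = \dim H(V)$, $h^w = \dim H(W)$.

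Next, I would specialise to $i = n$ and invoke Corollary \ref{euler}, which says $\chi_n(v) = \chi_n(h^v)$ and $\chi_n(w) = \chi_n(h^w)$, hence $\chi_n(v - h^v) = \chi_n(w - h^w) = 0$ (using that $\chi_n$ is linear on $\Z^{n+1}$). Substituting back gives $0 \leq \chi_n(r-s) \leq 0$, so $\chi_n(r-s) = 0$, i.e.\ $\chi_n(\rk(f)) = \chi_n(\rk([f]))$.

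There is no real obstacle here: the entire content of the corollary is already encoded in the pair (\ref{euler},\ref{comp_en_hom}), and the only thing to be careful about is the elementary fact that $\chi_n$ respects subtraction, which is immediate from its definition as an alternating sum. In particular, no new combinatorial computation is required beyond observing that the two upper bounds coming from $V$ and $W$ both collapse to zero at the top index.
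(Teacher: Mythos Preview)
Your proof is correct and follows exactly the paper's own argument: both set up $f\in\mathcal{C}_{r,s}$, apply Proposition~\ref{comp_en_hom} to obtain $0\leq\chi_n(r-s)\leq\min(\chi_n(v-h^v),\chi_n(w-h^w))$, and then use Corollary~\ref{euler} to collapse the upper bound to zero. The only difference is that you spell out the linearity of $\chi_n$ and the role of each cited result more explicitly than the paper does.
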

\begin{proof}
We have $f\in\mathcal{C}_{t,u}$ for some $t,u\in\N_0^{n+1}$.
From \ref{comp_en_hom} and \ref{euler} follows
$$0\leq\chi_n(t-u)\leq 0\then \chi_n(t)=\chi_n(u).$$\qed
\end{proof}

\subsection{Subvariety of quasi-isomorphism.}

Fix $(V,d)$ and $(W,\delta)$ two complexes such that $h=\dim H(V)=\dim H(W)$.
This is a necessary hypothesis for the existence of quasi-isomorphisms.
Let $\mathcal{Q}=\mathcal{Q}(V,W)\subseteq \hom^0(V,W)$ be the subvariety of quasi-isomorphisms,
$v:=\dim V$ and $w:=\dim W$. Let $f\in\hom^0(V,W)$ be
a quasi-isomorphism then there exist $q$ such that $f\in\mathcal{C}_{q,h}$.
In fact, using \ref{comp_en_hom}, we know that
$$\mathcal{C}_{q,h}\neq\emptyset\iff 0\leq \chi_i(q-h)\leq \min(\chi_i(v-h),\chi_i(w-h))\iff$$
$$0\leq \chi_i(q)-\chi_i(h)\leq \min(\chi_i(v),\chi_i(w))-\chi_i(h)\iff$$
$$\chi_i(h)\leq \chi_i(q)\leq \min(\chi_i(v),\chi_i(w)).$$
Then if $q$ is such that
$$h\leq q\leq\min(v,w),\quad \chi_i(h)\leq \chi_i(q)\leq \min(\chi_i(v),\chi_i(w))
\then\overline{\mathcal{C}_{q,h}}\subseteq\mathcal{Q}.$$
The irreducible components of $\mathcal{Q}$ correspond to the maximal $q$ satisfying this condition.
When $v=w$ we have one maximum $q$, so $\mathcal{Q}=\overline{\mathcal{C}_{v,h}}$ is irreducible of dimension
$$\dim(\mathcal{Q})=\sum_{i=0}^n h_i^2+\chi_i(v-h)^2.$$

\section{Variety of complexes.}
Suppose we have $V=H(V)$. We want to study the variety
$\mathcal{D}=\mathcal{D}(V)\subseteq\hom^1(V,V)$ of all the complex structures over $V$.
$$\mathcal{D}=\mathcal{D}(V):=\{d\in\hom^0(V,V[1])\,|\,d\circ d=0\}.$$
The action of $GL(V)$ on $\mathcal{D}$ is different from the action of the group
$GL(V)\times GL(V[1])$ on $\hom^1(V,V)$ whose orbits are $\mathcal{C}_{r,r}$.
The group $GL(V)$ preserve the condition $d^2=0$ but $GL(V)\times GL(V[1])$ does not.
Let's define
$$\mathcal{D}_r:=\{d\in\mathcal{C}_{r,r}\,|\,d^2=0\}=\{d:V\fl{}V[1]\,|\,d^2=0,\,\rk(d)=r\}.$$
The reason why we used $\mathcal{C}_{r,r}$ in the definition is
that $V=H(V)$, so $\rk(d)=\rk([d])$ for every $d\in\hom^0(V,V[1])$.
The action of $GL(V)$ preserve $\mathcal{D}_{r}$,
$$(\phi d\phi^{-1})^2=\phi d\phi^{-1}\phi d\phi^{-1}=\phi d^2\phi^{-1}=0.$$
Given that $d^{i+1}d^i=0$ it is possible to find a basis of $V^i$ and of $V^{i+1}$
in such a way that the matrixes of both maps are diagonal (with ones and zeros),
then the action is transitive implying that $\mathcal{D}_{r}$ are the orbits.
Even more, the irreducible components of $\mathcal{D}$ are $\overline{\mathcal{D}_{r}}$
for some $r\in\N_0^{n+1}$.

\begin{prop}
Let $v=\dim V$ and let $r\in\N^{n+1}_0$.
If $\mathcal{D}_r\neq\emptyset$ then $\mathcal{D}_r$ is smooth, irreducible and
$$\dim \mathcal{D}_r=\sum_{i=0}^n (v_i+v_{i+1} -r_i-r_{i-1})r_i.$$
\end{prop}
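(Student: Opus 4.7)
The plan is to mirror the structure of the preceding propositions in this section by peeling off the last differential and inducting on the length $n$. Concretely, consider the forgetful map
\[
\pi \colon \mathcal{D}_r(V) \longrightarrow \mathcal{D}_{\widehat r}(\widehat V), \qquad (d^0,\ldots,d^{n-1}) \longmapsto (d^0,\ldots,d^{n-2}),
\]
where $\widehat V=V^0\oplus\cdots\oplus V^{n-1}$ and $\widehat r=(r_0,\ldots,r_{n-2})$. The fiber over a point of the base consists of linear maps $d^{n-1}\colon V^{n-1}\to V^n$ satisfying $\rk(d^{n-1})=r_{n-1}$ and $d^{n-1}\,d^{n-2}=0$.

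The key observation is that the cocycle condition $d^{n-1}d^{n-2}=0$ is equivalent to $d^{n-1}$ factoring through $V^{n-1}/\im(d^{n-2})$, a vector space whose dimension $v_{n-1}-r_{n-2}$ depends only on $\widehat r$ and not on the particular truncated complex. Every fiber is therefore isomorphic to the classical variety of rank-$r_{n-1}$ linear maps $\C^{v_{n-1}-r_{n-2}}\to\C^{v_n}$, which is smooth, irreducible and of dimension $(v_{n-1}+v_n-r_{n-2}-r_{n-1})\,r_{n-1}$ by the determinantal variety result already cited in this section (proposici\'on 12.2 of \cite{MR1182558}). Since the fibers are equidimensional, smooth and irreducible, and the base is smooth and irreducible by induction, standard facts on equidimensional surjections give the same properties for $\mathcal{D}_r$, with $\dim\mathcal{D}_r$ equal to the sum of base and fiber dimensions.

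Adding up: the inductive formula for $\mathcal{D}_{\widehat r}(\widehat V)$ (with conventions $\widehat v_n=0$ and $\widehat r_{n-1}=0$, which make its $i=n-1$ term vanish) supplies the summands $i=0,\ldots,n-2$ of the stated formula; the fiber contributes exactly the $i=n-1$ summand; and the $i=n$ summand is zero because $r_n=0$. The base case $n=0$ is a single point and matches the empty sum.

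The main obstacle is a bookkeeping one: verifying that $\pi$ is surjective, so the additive formula $\dim\mathcal{D}_r=\dim\mathcal{D}_{\widehat r}+\dim(\text{fiber})$ is valid, and that the inductive sum really telescopes once the upper end-of-range dimensions are adjusted. Surjectivity follows from the hypothesis $\mathcal{D}_r\neq\emptyset$: the inequalities $r_{n-1}\leq v_n$ and $r_{n-1}\leq v_{n-1}-r_{n-2}$ needed for each fiber to be non-empty are implied by the existence of any single point of $\mathcal{D}_r$ (its truncation lies in $\mathcal{D}_{\widehat r}$ and it witnesses the needed inequalities). Everything else is a routine calculation in the spirit of the rest of the section.
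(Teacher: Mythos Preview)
Your proof is correct and follows essentially the same strategy as the paper: forget the last differential, identify each fiber with a classical determinantal variety via the factorization through the quotient by the previous image, and induct. The only differences are cosmetic---an index shift (you label the last differential $d^{n-1}$ while the paper calls it $d^n$) and that you spell out the surjectivity of $\pi$ and the telescoping bookkeeping, which the paper leaves implicit.
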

\begin{proof}
Consider the map that forget the last differential
$$\pi:\mathcal{D}_{r}\epi\widehat{\mathcal{D}}_{\widehat{r}}$$
The fibers $\pi^{-1}(\widehat{d})=\pi^{-1}(d^0,d^1,\ldots,d^{n-1})$ are
$$\{d^n\in\hom(V^n,V^{n+1})\,|\,d^nd^{n-1}=0,\,\rk(d^n)=r_n\}\cong$$
$$\{f\in\hom(V^n/\im(d^{n-1}),V^{n+1})\,|\,\rk(f)=r_n\}.$$
They are irreducible, smooth and of dimension
$$((v_n-r_{n-1})+v_{n+1}-r_n)r_n=(v_n+v_{n+1}-r_n-r_{n-1})r_n.$$
The result follows by induction.\qed
\end{proof}

\begin{prop}
Let $v=\dim V$ and let $r\in\N^{n+1}_0$.
Assume $\mathcal{D}_r\neq\emptyset$ then
$$\overline{\mathcal{D}_r}=\bigcup_{t\leq r}\mathcal{D}_t=\{d\,|\,d^2=0,\,\rk(d)\leq r\}.$$
\end{prop}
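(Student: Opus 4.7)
The plan is to adapt the incidence-variety argument used in the preceding section for $\mathcal{C}_{r,s}$. Put
$$E_r := \{d\in\hom^0(V,V[1])\,|\,d\circ d=0,\ \rk(d)\leq r\} = \bigcup_{t\leq r}\mathcal{D}_t.$$
Each of $d\circ d=0$ and $\rk(d^i)\leq r_i$ is a closed condition, so $E_r$ is closed and $\overline{\mathcal{D}_r}\subseteq E_r$. Conversely, $\mathcal{D}_r\subseteq E_r$ is the locus where every rank bound is an equality (cut out by the open conditions $\rk(d^i)\geq r_i$), and it is nonempty by hypothesis. So it will suffice to show that $E_r$ is irreducible: then $\mathcal{D}_r$ is a nonempty open subset of an irreducible variety, hence dense, and the two closures agree.

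For irreducibility, I would introduce the incidence variety
$$\mathcal{I}_r := \{(L,d)\in\mathcal{G}\times\hom^0(V,V[1])\,|\,\im(d^{i-1})\subseteq L^i\subseteq\ker(d^i)\ \forall\,i\},$$
with $\mathcal{G} := \prod_i \text{Grass}(V^i,r_{i-1})$ (and the convention $L^0 := 0$). The first projection $\pi_1:\mathcal{I}_r\to\mathcal{G}$ is surjective, and its fiber over $L$ is
$$\prod_i\{d^i\in\hom(V^i,V^{i+1})\,|\,d^i|_{L^i}=0,\ \im(d^i)\subseteq L^{i+1}\}\cong \prod_i\hom(V^i/L^i,L^{i+1}),$$
an affine space of dimension independent of $L$. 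Hence $\mathcal{I}_r$ is irreducible, and so is $\pi_2(\mathcal{I}_r)$.

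The last step is to identify $\pi_2(\mathcal{I}_r)=E_r$. The inclusion $\subseteq$ is immediate: $\im(d^{i-1})\subseteq\ker(d^i)$ gives $d\circ d=0$, and $\rk(d^{i-1})\leq\dim L^i=r_{i-1}$. For the reverse, given $d\in E_r$ one must find $L^i$ of dimension $r_{i-1}$ with $\im(d^{i-1})\subseteq L^i\subseteq\ker(d^i)$, which is possible iff $r_{i-1}\leq v_i-\rk(d^i)$, i.e.\ iff $r_{i-1}+r_i\leq v_i$. This numerical constraint is the only delicate point of the argument, and it is forced by the hypothesis $\mathcal{D}_r\neq\emptyset$: any $d^*\in\mathcal{D}_r$ has $\im(d^{*,i-1})\subseteq\ker(d^{*,i})$ with $\rk(d^{*,i-1})=r_{i-1}$ on the left and $\dim\ker(d^{*,i})=v_i-r_i$ on the right. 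Thus $E_r=\pi_2(\mathcal{I}_r)$ is irreducible, completing the proof.
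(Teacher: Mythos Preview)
Your argument is correct and follows essentially the same incidence-variety strategy as the paper: define $E_r$, observe it is closed and contains $\mathcal{D}_r$ as an open subset, then show $E_r$ is irreducible by realizing it as the image of an incidence variety fibered in affine spaces over a product of Grassmannians. Your version is in fact more explicit than the paper's on the crucial point---the verification that $\pi_2(\mathcal{I}_r)=E_r$, and in particular that the numerical constraint $r_{i-1}+r_i\leq v_i$ (equivalently Proposition~\ref{comp_en_D}) is exactly what the hypothesis $\mathcal{D}_r\neq\emptyset$ supplies.
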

\begin{proof}
Let
$$E_r:=\{d:V\fl{}V[1]\,|\,d\circ d=0,\,\rk(d)\leq r\}.$$
Being close we have $\overline{\mathcal{D}_r}\subseteq E_r$. Let
$$\mathcal{I}_r:=\{(L,d)\,|\,\im\,d^i\subseteq L^{i+1}\subseteq\ker d^{i+1}\}\subseteq \G_{r,r}(V)\times\mathcal{D}.$$
The fibers of the first projection $\pi_1^{-1}(L)\cong\hom^0(V/L,L[1])$ are irreducible of the same dimension,
then $\mathcal{I}_r$ and $\im(\pi_2)=E_r$ are irreducible.
Given that $\mathcal{D}_r$ is dense in $E_r$ we get the equality.\qed
\end{proof}

\begin{cor}
$\mathcal{D}_t\subseteq\overline{\mathcal{D}_r}\iff t\leq r$.\qed
\end{cor}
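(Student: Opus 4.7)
The plan is to deduce this corollary directly from the preceding proposition, which identifies $\overline{\mathcal{D}_r}$ with the union $\bigcup_{t \leq r} \mathcal{D}_t$.

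The forward implication is immediate: if $t \leq r$, the previous proposition gives $\mathcal{D}_t \subseteq \bigcup_{u \leq r} \mathcal{D}_u = \overline{\mathcal{D}_r}$ with no further work.

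For the reverse implication I would assume $\mathcal{D}_t \neq \emptyset$ (the case of interest; otherwise the inclusion is vacuous) and pick a differential $d \in \mathcal{D}_t$. Then $d \in \overline{\mathcal{D}_r}$, so the preceding proposition puts $d$ in some stratum $\mathcal{D}_u$ with $u \leq r$. The key observation is that the family $\{\mathcal{D}_u\}_u$ is a disjoint stratification of $\mathcal{D}$: each differential has a single, well-defined rank vector $\rk(d) \in \N_0^{n+1}$, so $d$ belongs to exactly one $\mathcal{D}_u$, namely the one with $u = \rk(d) = t$. Therefore $u = t$, and $u \leq r$ yields $t \leq r$.

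I do not expect any obstacle here; the work has already been done in the previous proposition, and the only ingredient beyond it is the tautological disjointness of the rank strata. The corollary is essentially a bookkeeping consequence of the closure formula.
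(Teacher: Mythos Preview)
Your argument is correct and matches the paper's approach: the paper simply marks the corollary with \qed, treating it as an immediate consequence of the closure formula $\overline{\mathcal{D}_r}=\bigcup_{t\leq r}\mathcal{D}_t$ together with the disjointness of the rank strata, exactly as you outline. Your labeling of which direction is ``forward'' is reversed relative to the statement as written, but both implications are handled correctly.
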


\begin{cor}
$\overline{\mathcal{D}_t}\bigcap\overline{\mathcal{D}_r}=\overline{\mathcal{D}_{\min(t,r)}}.$\qed
\end{cor}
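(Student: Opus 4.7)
The plan is to deduce the equality directly from the preceding proposition's explicit description $\overline{\mathcal{D}_r}=\{d\in\hom^0(V,V[1])\mid d\circ d=0,\ \rk(d)\leq r\}$, together with the componentwise nature of the order $\leq$ on $\N_0^{n+1}$.

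First I would unfold both sides. An element $d$ lies in $\overline{\mathcal{D}_t}\cap\overline{\mathcal{D}_r}$ if and only if $d\circ d=0$ and $d$ satisfies $\rk(d)\leq t$ as well as $\rk(d)\leq r$. Because $\leq$ is defined componentwise in $\N_0^{n+1}$, the conjunction $\rk(d)\leq t$ and $\rk(d)\leq r$ is equivalent to the single inequality $\rk(d)\leq\min(t,r)$, where the minimum is taken coordinate by coordinate. Thus the intersection coincides set-theoretically with $\{d\mid d\circ d=0,\ \rk(d)\leq\min(t,r)\}$, which is exactly the right-hand side by another application of the preceding proposition.

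Second, I would check the nonemptiness hypothesis required to invoke that characterization on the right. If both $\mathcal{D}_t$ and $\mathcal{D}_r$ are nonempty, then since $\min(t,r)\leq r$, the decomposition $\overline{\mathcal{D}_r}=\bigcup_{u\leq r}\mathcal{D}_u$ from the preceding proposition forces $\mathcal{D}_{\min(t,r)}\neq\emptyset$, so its closure is legitimately described by the rank bound. In the degenerate case where at least one of $\mathcal{D}_t$, $\mathcal{D}_r$ is empty, both sides of the proposed identity are empty and the equality holds trivially.

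There is no substantive obstacle: all of the geometric content (irreducibility, smoothness, and the rank-bounded description of the closure) has already been packaged into the preceding proposition, and the corollary amounts to the Boolean identity $(\rk(d)\leq t\text{ and }\rk(d)\leq r)\iff \rk(d)\leq\min(t,r)$ transported through that description.
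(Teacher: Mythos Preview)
Your approach is correct and is exactly what the paper intends: the corollary is stated with an immediate \qed\ and no argument, relying on the preceding description $\overline{\mathcal{D}_r}=\{d\mid d^2=0,\ \rk(d)\leq r\}$ together with the componentwise order on $\N_0^{n+1}$. One small remark: the sentence ``the decomposition $\overline{\mathcal{D}_r}=\bigcup_{u\leq r}\mathcal{D}_u$ forces $\mathcal{D}_{\min(t,r)}\neq\emptyset$'' is not literally a valid inference (a union can have empty summands); the clean justification is that if $\mathcal{D}_r\neq\emptyset$ then for any $u\leq r$ one has $u_i+u_{i-1}\leq r_i+r_{i-1}\leq v_i$, so $\mathcal{D}_u\neq\emptyset$ by the nonemptiness criterion proved just below.
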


\begin{prop}\label{comp_en_D}
Let $v=\dim V$ and let $r\in\N^{n+1}_0$ be such that $r_i\leq v_{i+1}$ then
$$\mathcal{D}_{r}\neq\emptyset\iff r_{i}+r_{i-1}\leq v_{i}.$$
Even more if $d\in\mathcal{D}_{r}$,
$$\dim H^i(V,d)=v_i-r_i-r_{i-1}.$$
\end{prop}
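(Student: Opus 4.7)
The plan is to handle the two implications separately, with the homology formula falling out as a by-product of necessity.

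For necessity and the homology formula, I would simply observe that any $d\in\mathcal{D}_r$ satisfies $d^id^{i-1}=0$, so $\im(d^{i-1})\subseteq\ker(d^i)$. Taking dimensions gives $r_{i-1}\leq v_i-r_i$, equivalently $r_i+r_{i-1}\leq v_i$, which is the required inequality. The same inclusion computes the homology directly:
$$\dim H^i(V,d)=\dim\ker(d^i)-\dim\im(d^{i-1})=(v_i-r_i)-r_{i-1},$$
which is the displayed formula.

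For sufficiency, I would induct on $n$ and reuse the forgetful surjection $\pi:\mathcal{D}_r\epi\widehat{\mathcal{D}}_{\widehat{r}}$ from the proof of the preceding proposition, where $\widehat r=(r_0,\ldots,r_{n-1})$. The truncated data $\widehat r$ still satisfies the hypotheses $\widehat r_i\leq v_{i+1}$ and $\widehat r_i+\widehat r_{i-1}\leq v_i$ for $i<n$, so by induction $\widehat{\mathcal{D}}_{\widehat r}\neq\emptyset$. Given any $\widehat d$ in the image, the fiber was identified in that proof with
$$\{f\in\hom(V^n/\im(d^{n-1}),V^{n+1})\,|\,\rk(f)=r_n\},$$
which is nonempty precisely when $r_n\leq\min(v_n-r_{n-1},v_{n+1})$. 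The bound $r_n\leq v_{n+1}$ is the standing hypothesis, and $r_n\leq v_n-r_{n-1}$ is exactly the new condition $r_n+r_{n-1}\leq v_n$; lifting any such $f$ yields a point of $\mathcal{D}_r$.

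There is no serious obstacle; the argument is essentially a direct consequence of the fibration analysis already carried out. The only care required is the base case ($n=0$, so $V=V^0$ and necessarily $r_0=0$, making $\mathcal{D}_{(0)}=\{0\}$ trivially nonempty) and the bookkeeping convention $r_{-1}=0$ implicit in writing $r_0+r_{-1}\leq v_0$.
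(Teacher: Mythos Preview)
Your proof is correct and follows essentially the same approach as the paper: an inductive construction of $d^i$ via the factorization $V^i/\im(d^{i-1})\to V^{i+1}$, with the nonemptiness of the fiber reducing to the rank condition $r_i\leq\min(v_i-r_{i-1},v_{i+1})$. Your version is more carefully written (separating necessity from sufficiency, invoking the fibration explicitly, and handling the base case and $r_{-1}=0$ convention), but the idea is the same.
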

\begin{proof}
Suppose we have $d^0,\ldots,d^{i-1}$.
Given that $d\circ d=0$, the map $d^i$ induces
$$d^i:V^i/\im(d^{i-1})\fl{}V^{i+1}.$$
We need $\rk(d^i)=r_i$.
By hypothesis we have $r_i\leq v_{i+1}$, so $d^i$ exist if and only if $r_i\leq v_i-r_{i-1}$.
In particular we have calculated the dimension of $H^i(V,d)$.\qed
\end{proof}

\subsection{Subvariety of exact complexes.}

Let $\mathcal{E}=\mathcal{E}(V)\subseteq \mathcal{D}(V)$ be the subvariety of exact complexes.
Let $v=\dim(V)$ and $e_i:=\chi_i(v)\geq 0$, from \ref{comp_en_D} we have
$$e_i+e_{i-1}=\chi_i(v)+\chi_{i-1}(v)=v_i\leq v_i\then\overline{\mathcal{D}_{e}}\subseteq\mathcal{D}.$$
$$\dim (\mathcal{D}_{e})=\sum_{i=0}^n (v_i+v_{i+1}-e_i-e_{i-1})e_i.$$
Given that $v_i-e_{i-1}-e_i=0$, if $d\in \mathcal{D}_{e}$ we have $\dim H(V,d)=0$.
It follows from \ref{comp_en_D} that any exact differential belongs to $\mathcal{D}_{e}$, then
$$\mathcal{E}=\overline{\mathcal{D}_{e}}=\{d\,|\,d^2=0,\,\rk(d^i)\leq e_i\}.$$
If some $e_i<0$ there are no exact differentials.

\section{Preliminaries on $\mathfrak{g}$-structures.}

Let $\mathfrak{g}$ be a semisimple Lie algebra over $\C$.
Let $R(\mathfrak{g})$ be the ring of finite dimensional
$\mathfrak{g}$-representations.
To each representation $V$ we associate its class $cl(V)\in R(\mathfrak{g})$.
Every finite dimensional representation could be written as a sum of irreducible representation.
This decomposition in unique up to isomorphisms (see \cite[\S 23.2]{MR1153249}).
For example, it is well known, \cite[11.31]{MR1153249}, that if $V=S^r(\C^2)$ then
$$cl(S^2(V))=\sum_{m=0}^{\lfloor\frac{r}{2}\rfloor} cl(S^{2r-4m}(\C^2) )\in R(\mathfrak{sl}_2(\C)).$$

\begin{defs}
For $S\in R(\mathfrak{g})$ we will say $S\geq 0$ if its decomposition as
a finite sum of irreducible representations has only non-negative coefficients, in particular
$$S_1\leq S_2\iff S_2-S_1\geq 0.$$
Note that $V\subseteq W\iff cl(V)\leq cl(W)$ where $cl(V)$ and $cl(W)$ are its classes in $R(\mathfrak{g})$.\\

Given $f:V\fl{}W$ a morphism let's define its $\mathfrak{g}$-rank as
$$\rk_{\mathfrak{g}}(f):=cl(\im(f))\in R(\mathfrak{g})\then\rk_{\mathfrak{g}}(f)\leq cl(W).$$

Let $\hom_{\mathfrak{g}}(V,W)_{s}$ be the variety of all the $\mathfrak{g}$-morphisms
with $\mathfrak{g}$-rank $s\in R(\mathfrak{g})$.
Given a representation $V$ and a subrepresentation $S$, let
$Gr_{\mathfrak{g}}(V,S)$ be the variety of all the subrepresentations isomorphic to $S$.
Given that this variety only depends on the classes of $V$ and of $S$, in general
we will denote it $Gr_{\mathfrak{g}}(cl(V),cl(S))$.
\end{defs}

We are going to study $\hom_{\mathfrak{g}}(V,W)_{s}$ and $Gr_{\mathfrak{g}}(V,S)$.
Let's start with $\hom_{\mathfrak{g}}(V,W)_{s}$.

\section{$\mathfrak{g}$-determinantal variety.}

\begin{prop}
Let $V$ and $W$ be two representations with
$cl(V)=\sum_{i=1}^k n_icl(V_i)$ and $cl(W)=\sum_{i=1}^k m_icl(V_i)$ then
$$\dim\hom_{\mathfrak{g}}(V,W)=\dim\hom_{\mathfrak{g}}(cl(V),cl(W))=\sum_{i=1}^k n_im_i.$$
\end{prop}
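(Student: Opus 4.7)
The plan is to reduce the computation to Schur's lemma by exploiting complete reducibility, which is available because $\mathfrak{g}$ is semisimple (Weyl's theorem). Since the right-hand side $\sum n_i m_i$ depends only on the multiplicities $n_i$ and $m_i$, the first observation to record is that $\dim\hom_{\mathfrak{g}}(V,W)$ is indeed an invariant of the isomorphism classes $cl(V)$ and $cl(W)$: an isomorphism $V\cong V'$ of $\mathfrak{g}$-representations induces a linear isomorphism $\hom_{\mathfrak{g}}(V,W)\cong \hom_{\mathfrak{g}}(V',W)$ by precomposition, and similarly on the other side. This justifies writing $\hom_{\mathfrak{g}}(cl(V),cl(W))$.

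Next I would decompose $V$ and $W$ into isotypic components. By complete reducibility we may fix isomorphisms $V\cong\bigoplus_{i=1}^k V_i^{\oplus n_i}$ and $W\cong\bigoplus_{i=1}^k V_i^{\oplus m_i}$ with the $V_i$ pairwise non-isomorphic irreducibles. Since $\hom_{\mathfrak{g}}(-,-)$ is biadditive with respect to direct sums, one obtains
\[
\hom_{\mathfrak{g}}(V,W)\;\cong\;\bigoplus_{i,j=1}^{k}\hom_{\mathfrak{g}}(V_i,V_j)^{\oplus n_i m_j}.
\]

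The key step is now Schur's lemma: for irreducible finite dimensional $\mathfrak{g}$-representations $V_i$ and $V_j$ over the algebraically closed field $\C$, the space $\hom_{\mathfrak{g}}(V_i,V_j)$ is one dimensional if $i=j$ and zero otherwise. Substituting this into the decomposition above collapses the double sum to the diagonal and yields
\[
\dim\hom_{\mathfrak{g}}(V,W)=\sum_{i=1}^{k} n_i m_i,
\]
as required.

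I do not expect a serious obstacle here: the entire argument rests on two standard facts (Weyl's complete reducibility theorem for semisimple Lie algebras and Schur's lemma over $\C$). The only mild point to be careful about is to state the invariance under choice of decomposition explicitly, so that passing from $V,W$ to $cl(V),cl(W)$ is legitimate; this is handled by the initial functoriality remark.
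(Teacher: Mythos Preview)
Your proof is correct and follows essentially the same approach as the paper's: decompose $V$ and $W$ into irreducibles, use biadditivity of $\hom_{\mathfrak{g}}(-,-)$ to obtain $\bigoplus_{i,j}\hom_{\mathfrak{g}}(V_i,V_j)^{n_i m_j}$, and then apply Schur's lemma to collapse the sum to the diagonal. Your additional remarks on invariance under isomorphism and the explicit appeal to Weyl's theorem are sound and simply make explicit what the paper leaves implicit.
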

\begin{proof}
It follows from the following:
$$\hom_{\mathfrak{g}}(V,W)=\hom_{\mathfrak{g}}(\oplus V_i^{n_i},\oplus V_j^{m_j})=\oplus_{i,j}\hom_{\mathfrak{g}}(V_i^{n_i},V_j^{m_j})=$$
$$\oplus_{i,j}\hom_{\mathfrak{g}}(V_i,V_j)^{n_i m_j}=\oplus_{i}\hom_{\mathfrak{g}}(V_i,V_i)^{n_im_i}=\oplus_{i=1}^k\C^{n_im_i}.$$
The last equality follows from Schur's Lemma, \cite[\S 6.1]{MR499562}.\qed
\end{proof}

\begin{lema}
Let $V$ be an irreducible representation and let $n,m,s\in\N$ with $s\leq\min(m,n)$ then
$$\hom_{\mathfrak{g}}(V^n,V^m)_{V^s}\cong\hom(\C^n,\C^m)_s.$$
\end{lema}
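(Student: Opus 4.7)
The plan is to construct an explicit isomorphism of vector spaces
$\Psi:\hom(\C^n,\C^m)\fl{\sim}\hom_{\mathfrak{g}}(V^n,V^m)$
and then verify that it restricts to the desired isomorphism of subvarieties.
The underlying idea is to identify $V^n$ with $V\otimes\C^n$ (and $V^m$ with $V\otimes\C^m$),
and to set $\Psi(A):=\mathrm{id}_V\otimes A$, so that a scalar matrix $A=(\lambda_{ij})$
corresponds to the $\mathfrak{g}$-morphism whose blocks are $\lambda_{ij}\,\mathrm{id}_V$.

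First I would verify that $\Psi$ is a linear isomorphism. Linearity and injectivity are immediate.
For surjectivity, given $f\in\hom_{\mathfrak{g}}(V^n,V^m)$ I decompose it into $m\times n$
blocks $f_{ij}\in\hom_{\mathfrak{g}}(V,V)$ and invoke Schur's Lemma, which applies because $V$ is
irreducible over the algebraically closed field $\C$: it forces $f_{ij}=\lambda_{ij}\,\mathrm{id}_V$
for unique scalars $\lambda_{ij}\in\C$, so $f=\Psi(A)$ with $A=(\lambda_{ij})$. Since both $\Psi$
and $\Psi^{-1}$ are linear, they are morphisms of algebraic varieties.

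Next I transport the rank stratification. The image of $\Psi(A)=\mathrm{id}_V\otimes A$ inside
$V\otimes\C^m$ equals $V\otimes\mathrm{im}(A)$, which as a $\mathfrak{g}$-representation is isomorphic
to $V^{\rk(A)}$. Hence $\rk_{\mathfrak{g}}(\Psi(A))$ equals the class of $V^s$ if and only if
$\rk(A)=s$, and $\Psi$ restricts to a bijection, hence (being linear) an isomorphism of varieties,
between $\hom(\C^n,\C^m)_s$ and $\hom_{\mathfrak{g}}(V^n,V^m)_{V^s}$.

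The only genuine content is Schur's Lemma, used to identify $\hom_{\mathfrak{g}}(V,V)$ with $\C$;
everything else is routine bookkeeping with the identification $V^k\cong V\otimes\C^k$, so no
serious obstacle is anticipated.
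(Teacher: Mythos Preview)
Your proof is correct and reaches the same identification as the paper, but the mechanism differs slightly. The paper argues via highest weight vectors: since $V$ is irreducible it has a unique highest weight line, so $V^n$ carries an $n$-dimensional space of highest weight vectors, and a $\mathfrak{g}$-morphism $V^n\to V^m$ is determined by (and determines) a linear map between these highest weight spaces; the rank of that linear map is precisely the number of copies of $V$ in the image. Your route instead uses the tensor identification $V^n\cong V\otimes\C^n$ together with Schur's Lemma on each block $f_{ij}$ --- a device the paper itself invokes in the immediately preceding proposition. The resulting isomorphism $\Psi$ is the same in both cases. Your argument has the minor advantage of not appealing to highest weight theory at all, so it works verbatim for any irreducible module with $\endo_{\mathfrak{g}}(V)=\C$; the paper's formulation is a bit more concrete in the semisimple setting it is already committed to.
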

\begin{proof}
The representation $V$ has only one highest weight line, $\langle v\rangle$. In general $V^n$
has only $n$ highest weight vectors linearly independent $\{v_1,\ldots,v_n\}$. A morphism from $V^n$ is determined
by this vectors. Every morphism sends highest weight vectors to highest weight vectors hence
$$\hom_{\mathfrak{g}}(V^n,V^m)=\hom(\C^n,\C^m).$$
Let $f\in\hom_{\mathfrak{g}}(V^n,V^m)$ such that $\im(f)\cong V^s$ then
$f$ sends the highest weight vectors $v_1,\ldots,v_n$ to $s$ linearly independent
highest weight vectors of $V^m$. In other words, $f$ determine a linear map of rank $s$.
Finally the result follows.\qed
\end{proof}

\begin{cor}
Let $V$ and $W$ be two representations with
$cl(V)=\sum_{i=1}^k n_icl(V_i)$, $cl(W)=\sum_{i=1}^k m_icl(V_i)$ and let $0\leq s\leq \min(cl(V),cl(W))$
with $s=\sum_{i=1}^k s_icl(V_i)$ then
$$\hom_\mathfrak{g}(V,W)_s=\hom(\C^{n_1},\C^{m_1})_{s_1}\times\ldots\times\hom(\C^{n_k},\C^{m_k})_{s_k}.$$
In particular, to have maximal $\mathfrak{g}$-rank is a generic condition.
\end{cor}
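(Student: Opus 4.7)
The plan is to reduce the statement to the previous lemma and proposition by decomposing everything along isotypic components. First I would write $V=\bigoplus_{i=1}^k V_i^{n_i}$ and $W=\bigoplus_{i=1}^k V_i^{m_i}$, and invoke the preceding proposition (together with Schur's lemma, which makes $\hom_{\mathfrak{g}}(V_i^{n_i},V_j^{m_j})=0$ for $i\neq j$) to obtain the splitting
$$\hom_{\mathfrak{g}}(V,W)=\bigoplus_{i=1}^k \hom_{\mathfrak{g}}(V_i^{n_i},V_i^{m_i}).$$
Under this identification a morphism $f$ corresponds to a tuple $(f_1,\ldots,f_k)$ with $f_i\in\hom_{\mathfrak{g}}(V_i^{n_i},V_i^{m_i})$.

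Next I would observe that $\im(f)=\bigoplus_i \im(f_i)$ as a subrepresentation, and that $\im(f_i)$ is a subrepresentation of $V_i^{m_i}$, hence isotypic of type $V_i$. Writing $\im(f_i)\cong V_i^{s_i'}$, the definition of $\mathfrak{g}$-rank gives $\rk_{\mathfrak{g}}(f)=\sum_i s_i' \, cl(V_i)$. Therefore $\rk_{\mathfrak{g}}(f)=s$ if and only if $s_i'=s_i$ for every $i$, i.e.\ if and only if each $f_i$ has $\mathfrak{g}$-rank $V_i^{s_i}$. This gives the set-theoretic decomposition
$$\hom_{\mathfrak{g}}(V,W)_s=\prod_{i=1}^k \hom_{\mathfrak{g}}(V_i^{n_i},V_i^{m_i})_{V_i^{s_i}},$$
and applying the previous lemma to each factor identifies the right-hand side with $\prod_{i=1}^k \hom(\C^{n_i},\C^{m_i})_{s_i}$.

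For the final assertion about maximal $\mathfrak{g}$-rank being generic, I would note that in each factor $\hom(\C^{n_i},\C^{m_i})$ the locus of maps of maximal rank $\min(n_i,m_i)$ is the complement of the vanishing of the maximal minors, hence Zariski open and dense. Since a finite product of open dense subsets is open dense, the locus where $\rk_{\mathfrak{g}}(f)=\min(cl(V),cl(W))$ is open and dense in $\hom_{\mathfrak{g}}(V,W)$.

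There is no real obstacle here: once the Schur decomposition is taken and the previous lemma is applied, the only point that requires a moment's care is checking that the image of $f$ really does split as the sum of the images of its isotypic components, which is immediate from the fact that the inclusions $V_i^{n_i}\hookrightarrow V$ and the projections $W\twoheadrightarrow V_i^{m_i}$ are $\mathfrak{g}$-equivariant and that $f$ preserves the isotypic decomposition.
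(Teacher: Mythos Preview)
Your proof is correct and follows exactly the paper's approach: the paper's proof simply asserts the isotypic decomposition
\[
\hom_\mathfrak{g}(V,W)_s=\hom_\mathfrak{g}(V_1^{n_1},V_1^{m_1})_{V_1^{s_1}}\times\ldots\times\hom_\mathfrak{g}(V_k^{n_k},V_k^{m_k})_{V_k^{s_k}}
\]
and says the result follows from the previous lemma; you have spelled out precisely why this decomposition holds and why the generic-rank statement follows, which is what the paper leaves to the reader.
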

\begin{proof}
It follows from the previous lemma and from the following:
$$\hom_\mathfrak{g}(V,W)_s=
\hom_\mathfrak{g}(V_1^{n_1},V_1^{m_1})_{V_1^{s_1}}\times\ldots
\times\hom_\mathfrak{g}(V_k^{n_k},V_k^{m_k})_{V_k^{s_k}}.$$\qed
\end{proof}

\begin{prop}
Given $cl(V)=\sum_{i=1}^k n_icl(V_i)$, $cl(W)=\sum_{i=1}^k m_icl(V_i)$ and
$s=\sum_{i=1}^k s_icl(V_i)$ such that $0\leq s\leq \min(cl(V),cl(W))$ we have that the variety
$\hom_{\mathfrak{g}}(V,W)_{s}$ is irreducible, smooth and
$$\dim\hom_{\mathfrak{g}}(V,W)_{s}=\sum_{i=1}^k (n_i+m_i-s_i)s_i.$$
\end{prop}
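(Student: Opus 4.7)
The plan is to reduce the statement to the classical case of matrices of given rank via the preceding corollary, which already decomposes $\hom_{\mathfrak{g}}(V,W)_{s}$ as a product of ordinary determinantal varieties.

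First, I would invoke the corollary directly: it furnishes the isomorphism
$$\hom_{\mathfrak{g}}(V,W)_{s}\cong\hom(\C^{n_1},\C^{m_1})_{s_1}\times\ldots\times\hom(\C^{n_k},\C^{m_k})_{s_k}.$$
Thus all three claimed properties (irreducibility, smoothness, dimension) will be inherited from the factors.

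Next, I would appeal to the well-known fact about the classical determinantal locus (the same reference to Harris used in the proof of the corresponding proposition for $\mathcal{C}_{r,s}$): for $0\leq s_i\leq\min(n_i,m_i)$, the variety $\hom(\C^{n_i},\C^{m_i})_{s_i}$ of linear maps of rank exactly $s_i$ is smooth, irreducible, and of dimension $(n_i+m_i-s_i)s_i$. The hypothesis $0\leq s\leq\min(cl(V),cl(W))$ translates coordinatewise to $0\leq s_i\leq\min(n_i,m_i)$, which is precisely what is needed to apply this classical fact on each factor.

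Finally, I would assemble the product: a finite product of smooth irreducible complex varieties is smooth and irreducible, and dimensions add, giving
$$\dim\hom_{\mathfrak{g}}(V,W)_{s}=\sum_{i=1}^k (n_i+m_i-s_i)s_i.$$
There is essentially no obstacle here, since the earlier lemma and corollary did all the representation-theoretic work by reducing $\mathfrak{g}$-equivariant morphisms between isotypic components to ordinary linear maps via highest weight vectors; the only content left is the classical determinantal statement together with the elementary behaviour of irreducibility, smoothness, and dimension under products.
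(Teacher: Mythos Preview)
Your proof is correct and follows essentially the same approach as the paper: invoke the preceding corollary to obtain the product decomposition into classical determinantal loci, then use the standard result (Harris, Proposition~12.2) that each factor $\hom(\C^{n_i},\C^{m_i})_{s_i}$ is smooth, irreducible, of dimension $(n_i+m_i-s_i)s_i$, and conclude by summing over the factors. The paper's own proof is simply a terser version of exactly this argument.
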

\begin{proof}
We already know that the variety $\hom_{\mathfrak{g}}(V,W)_{s}$ is irreducible and smooth.
The dimension follows from \cite[12.2]{MR1182558},
$$\dim \hom(\C^{n_i},\C^{m_i})_{s_i}=(n_i+m_i-s_i)s_i.$$\qed
\end{proof}

\begin{obs}
Assume $V=W$ and $cl(V)=\sum_{i=1}^k n_i cl(V_i)$.
An endomorphism of maximal $\mathfrak{g}$-rank is the same as an automorphism,
so we have that the space of automorphisms is a dense open subset of $\endo_{\mathfrak{g}}(V)$.
In particular,
$$\dim\auto_{\mathfrak{g}}(V)=\sum_{i=1}^k n_i^2.$$
\end{obs}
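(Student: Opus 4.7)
The plan is to identify $\auto_{\mathfrak{g}}(V)$ with the top stratum $\hom_{\mathfrak{g}}(V,V)_{cl(V)}$ inside the ambient affine space $\endo_{\mathfrak{g}}(V)$, and then read off everything from the preceding proposition. The ambient space $\endo_{\mathfrak{g}}(V)$ is a vector space of dimension $\sum_{i=1}^k n_i^2$ by the first proposition of the section, so it is irreducible and smooth, and that number is our target dimension.

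Next I would verify the equivalence $\rk_{\mathfrak{g}}(f)=cl(V)\iff f\in\auto_{\mathfrak{g}}(V)$. The ``$\Leftarrow$'' direction is trivial: an automorphism has $\im(f)=V$, so $\rk_{\mathfrak{g}}(f)=cl(V)$. For ``$\Rightarrow$'', the hypothesis says the subrepresentation $\im(f)\subseteq V$ is isomorphic to $V$ as a $\mathfrak{g}$-module; hence $\dim\im(f)=\dim V$, so $f$ is surjective, and by finite-dimensionality it is an automorphism. This identifies $\auto_{\mathfrak{g}}(V)=\hom_{\mathfrak{g}}(V,V)_{cl(V)}$.

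Now I would apply the previous proposition with $W=V$ and $s=cl(V)$, i.e.\ $s_i=n_i=\min(n_i,n_i)$ for every $i$. It gives that $\hom_{\mathfrak{g}}(V,V)_{cl(V)}$ is irreducible, smooth, and of dimension $\sum_{i=1}^k(n_i+n_i-n_i)n_i=\sum_{i=1}^k n_i^2$, which matches $\dim\endo_{\mathfrak{g}}(V)$. Openness of this stratum follows from the preceding corollary: the product decomposition $\hom_{\mathfrak{g}}(V,V)_s\cong\prod_{i}\hom(\C^{n_i},\C^{n_i})_{s_i}$ reduces the question to the classical fact that the maximal-rank locus is open in a space of matrices, so the non-maximal strata cut out a proper closed subset. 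Hence $\auto_{\mathfrak{g}}(V)$ is a dense open of the irreducible $\endo_{\mathfrak{g}}(V)$ of the stated dimension.

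The main obstacle is essentially cosmetic: one must be careful that ``maximal $\mathfrak{g}$-rank'' really is a Zariski-open condition, which is not literally the statement of the previous proposition (that result only gives irreducibility and dimension of each individual stratum). I would address this by invoking the explicit product description of the strata from the corollary, which reduces openness to the classical case. Once that is in hand, matching dimensions with $\endo_{\mathfrak{g}}(V)$ turns the open stratum into a dense open, and the dimension formula drops out immediately.
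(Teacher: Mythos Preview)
Your argument is correct and is exactly the reasoning the paper has in mind; note that the paper presents this as a \emph{remark} with no separate proof, the one-sentence justification being that maximal $\mathfrak{g}$-rank coincides with invertibility and that the preceding corollary already records maximal $\mathfrak{g}$-rank as a generic (open, dense) condition. Your expanded verification of the equivalence $\rk_{\mathfrak{g}}(f)=cl(V)\iff f$ is an automorphism, together with plugging $s_i=n_i$ into the dimension formula, simply makes explicit what the paper leaves implicit.
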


\begin{prop}
The group $\auto_{\mathfrak{g}}(V)\times\auto_{\mathfrak{g}}(W)$ acts on the left on
$\hom_{\mathfrak{g}}(V,W)$ and this action is transitive on $\hom_{\mathfrak{g}}(V,W)_s$.
\end{prop}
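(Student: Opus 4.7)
The plan is to verify the action is well defined, then reduce transitivity to the classical fact that $GL_n(\C)\times GL_m(\C)$ acts transitively on matrices of fixed rank, using the isotypic decomposition from the preceding corollary.

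First I would check that the formula $(\phi,\psi)\cdot f := \psi f \phi^{-1}$ is well defined on $\hom_{\mathfrak{g}}(V,W)$. Since $\phi\in\auto_{\mathfrak{g}}(V)$ and $\psi\in\auto_{\mathfrak{g}}(W)$ are $\mathfrak{g}$-equivariant and $f$ is $\mathfrak{g}$-equivariant, the composition $\psi f \phi^{-1}$ is $\mathfrak{g}$-equivariant, and the usual computation shows the group axioms hold. Moreover $\im(\psi f \phi^{-1}) = \psi(\im f) \cong \im f$ as representations, so the $\mathfrak{g}$-rank is preserved; hence the action restricts to each stratum $\hom_{\mathfrak{g}}(V,W)_s$.

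For transitivity, the idea is to use Schur's lemma to reduce to the isotypic pieces. Writing $cl(V)=\sum n_i\,cl(V_i)$ and $cl(W)=\sum m_i\,cl(V_i)$, any $\mathfrak{g}$-morphism $f:V\to W$ respects the isotypic decomposition and, by the preceding corollary, decomposes as a product $f = (f_1,\ldots,f_k)$ with $f_i \in \hom_{\mathfrak{g}}(V_i^{n_i},V_i^{m_i})_{V_i^{s_i}} \cong \hom(\C^{n_i},\C^{m_i})_{s_i}$. Similarly $\auto_{\mathfrak{g}}(V) \cong \prod_i GL_{n_i}(\C)$ and $\auto_{\mathfrak{g}}(W) \cong \prod_i GL_{m_i}(\C)$ by Schur, and the identifications are equivariant since both assignments are given by acting on the chosen bases of highest weight vectors. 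Thus the action of $\auto_{\mathfrak{g}}(V)\times\auto_{\mathfrak{g}}(W)$ on $\hom_{\mathfrak{g}}(V,W)_s$ is the product of the natural actions of $GL_{n_i}(\C)\times GL_{m_i}(\C)$ on $\hom(\C^{n_i},\C^{m_i})_{s_i}$.

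It then suffices to prove transitivity in each factor, which is the standard result: given two linear maps $\C^{n_i}\to\C^{m_i}$ of the same rank $s_i$, one can choose bases of source and target in which both have the same block form $\bigl(\begin{smallmatrix} I_{s_i} & 0 \\ 0 & 0 \end{smallmatrix}\bigr)$, and the change-of-basis matrices provide the required element of $GL_{n_i}(\C)\times GL_{m_i}(\C)$. There is no real obstacle here; the only point that requires some care is checking that the identification $\hom_{\mathfrak{g}}(V_i^{n_i},V_i^{m_i}) \cong \hom(\C^{n_i},\C^{m_i})$ from the previous lemma is equivariant with respect to the $\auto_{\mathfrak{g}}$-actions on both sides, but this is immediate since both sides are described by the action on the space of highest weight vectors.
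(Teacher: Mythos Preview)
Your proposal is correct and follows essentially the same approach as the paper: define the action, check that the $\mathfrak{g}$-rank is preserved because $\im(\psi f\phi^{-1})\cong\im(f)$, and invoke the classical transitivity of $GL_{n_i}\times GL_{m_i}$ on rank-$s_i$ matrices via the isotypic decomposition already established. The paper simply writes ``the transitivity follows from standard arguments'' for this last step, whereas you spell out those standard arguments; your formula $(\phi,\psi)\cdot f=\psi f\phi^{-1}$ is in fact the one consistent with the paper's convention in Section~1 (the formula $\phi f\psi^{-1}$ appearing in the paper's proof of this proposition is a typo).
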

\begin{proof}
Given $(\phi,\psi)\in\auto_{\mathfrak{g}}(V)\times\auto_{\mathfrak{g}}(W)$ and $f\in \hom_{\mathfrak{g}}(V,W)_s$, let
$$(\phi,\psi).f:=\phi f \psi^{-1}$$
The automorphisms $\phi$ and $\psi$ sends each irreducible submodule
into a copy of that irreducible submodule, then
$$\im(f)\cong \im(\phi f \psi^{-1})\then\rk_{\mathfrak{g}}(f)=\rk_{\mathfrak{g}}(\phi f \psi^{-1}).$$
The transitivity follows from standard arguments.\qed
\end{proof}

\begin{prop}
Given $cl(V)=\sum_{i=1}^k n_icl(V_i)$, $cl(W)=\sum_{i=1}^k m_icl(V_i)$ and
$s=\sum_{i=1}^k s_icl(V_i)$ such that $0\leq s\leq \min(cl(V),cl(W))$ we have
$$\overline{\hom_{\mathfrak{g}}(V,W)_s}=
\bigcup_{t\leq s}\hom_{\mathfrak{g}}(V,W)_t=\{f:V\fl{}W\,|\,\rk_{\mathfrak{g}}(f)\leq s\}.$$
\end{prop}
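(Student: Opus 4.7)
The plan is to reduce the $\mathfrak{g}$-equivariant statement to the analogous classical statement about ordinary determinantal varieties, which is a standard result (see \cite[proposici\'on 12.2]{MR1182558} as cited earlier for the smooth/irreducible part; for the closure description this is also classical).

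First I would invoke the product decomposition already proved in the corollary above:
$$\hom_{\mathfrak{g}}(V,W)_s\cong\hom(\C^{n_1},\C^{m_1})_{s_1}\times\ldots\times\hom(\C^{n_k},\C^{m_k})_{s_k},$$
which identifies $\hom_{\mathfrak{g}}(V,W)$ itself with the affine space $\prod_{i=1}^k\hom(\C^{n_i},\C^{m_i})$ via the isotypic decomposition. The point is that a $\mathfrak{g}$-morphism $f:V\fl{}W$ is determined by its restriction $f_i:V_i^{n_i}\fl{}V_i^{m_i}$ to each isotypic component, and under the identification from the lemma this restriction corresponds to a linear map $\C^{n_i}\fl{}\C^{m_i}$ of ordinary rank $s_i$ whenever $\rk_{\mathfrak{g}}(f|_{V_i^{n_i}})=s_i\, cl(V_i)$.

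Next, since closure commutes with finite products in affine varieties, the classical fact $\overline{\hom(\C^{n_i},\C^{m_i})_{s_i}}=\{g:\rk(g)\leq s_i\}$ applied factor by factor gives
$$\overline{\hom_{\mathfrak{g}}(V,W)_s}=\prod_{i=1}^k\{g_i\in\hom(\C^{n_i},\C^{m_i})\,|\,\rk(g_i)\leq s_i\}.$$
Now I would observe that, via the identification above, the $\mathfrak{g}$-rank is additive over the isotypic components: $\rk_{\mathfrak{g}}(f)=\sum_{i=1}^k \rk(f_i)\,cl(V_i)$. Hence the condition $\rk_{\mathfrak{g}}(f)\leq s$ in $R(\mathfrak{g})$ is equivalent to $\rk(f_i)\leq s_i$ for every $i=1,\ldots,k$, which is exactly the condition defining the product on the right.

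Finally I would rewrite that product as $\bigcup_{t\leq s}\prod_i\hom(\C^{n_i},\C^{m_i})_{t_i}=\bigcup_{t\leq s}\hom_{\mathfrak{g}}(V,W)_t$ by stratifying each factor by exact rank, getting all three equalities at once. The only subtle point is the additivity $\rk_{\mathfrak{g}}(f)=\sum \rk(f_i)\,cl(V_i)$, which is really the main content and follows from Schur's lemma: a $\mathfrak{g}$-morphism preserves the isotypic decomposition, so $\im(f)=\bigoplus_i\im(f_i)$ with $\im(f_i)\cong V_i^{\rk(f_i)}$, and no further identifications among the summands can occur. Beyond this I expect no real obstacle, since everything else is a routine product/closure argument building on the structure theorems proved just before.
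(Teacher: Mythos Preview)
Your proposal is correct and follows exactly the route the paper takes: reduce via the isotypic product decomposition to the classical closure description $\overline{\hom(\C^{n_i},\C^{m_i})_{s_i}}=\{g:\rk(g)\leq s_i\}$ and then reassemble. The paper's proof is in fact a single line invoking that classical fact, so your argument is simply a fuller elaboration of the same idea (including the observation that Zariski closure respects finite products and that $\rk_{\mathfrak{g}}$ decomposes additively over isotypic components).
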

\begin{proof}
The result follows from
$$\overline{\hom(\C^{n_i},\C^{m_i})_{s_i}}=\{f:\C^{n_i}\fl{}\C^{m_i}\,|\,\rk(f)\leq s_i\}.$$\qed
\end{proof}

\section{$\mathfrak{g}$-Grassmannian.}

Let's study now the variety $Gr_{\mathfrak{g}}(V,S)$.

\begin{lema}
Let $V$ be an irreducible representation and let $s\leq n\in\N$ then
$$Gr_\mathfrak{g}(V^n,V^s)\cong Gr(n,s).$$
\end{lema}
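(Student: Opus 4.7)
The plan is to produce a canonical bijection between $Gr_\mathfrak{g}(V^n, V^s)$ and the ordinary Grassmannian $Gr(n,s)$ via the space of highest weight vectors, and then observe that the bijection is an algebraic isomorphism.

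First I would identify, as in the proof of the previous lemma, the space $H \subseteq V^n$ of highest weight vectors. Since $V$ is irreducible, its highest weight space is a line $\langle v\rangle$, so inside $V^n$ the highest weight vectors form a subspace $H$ of dimension exactly $n$, with basis $v_1,\ldots,v_n$, where $v_i$ is the copy of $v$ placed in the $i$-th summand. The key structural fact I would use is that a semisimple submodule of a semisimple module is generated (as a submodule) by its highest weight vectors, and the number of irreducible summands of type $V$ equals the dimension of its highest weight space.

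Next I would build the map in both directions. Given an $s$-dimensional subspace $U \subseteq H$, let $\Phi(U) \subseteq V^n$ be the submodule generated by $U$; because each nonzero $u\in U$ generates a copy of the irreducible $V$, and the sum is direct (a dimension count, or the argument that the highest weight vectors of $\Phi(U)$ recover $U$), we get $\Phi(U) \cong V^s$, so $\Phi(U) \in Gr_\mathfrak{g}(V^n,V^s)$. Conversely, given $W \in Gr_\mathfrak{g}(V^n,V^s)$, let $\Psi(W) := W \cap H$, which is an $s$-dimensional subspace of $H$, hence an element of $Gr(n,s)$. The relations $\Psi\Phi = \mathrm{id}$ and $\Phi\Psi = \mathrm{id}$ are immediate from the fact that a subrepresentation of a semisimple module is determined by its highest weight vectors.

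Finally, I would verify that $\Phi$ and $\Psi$ are morphisms of varieties. For $\Psi$, the intersection with the fixed linear subspace $H$ is algebraic on the Grassmannian, and for $\Phi$ one writes the image of $U$ under a Chevalley-type lowering operator basis, which depends polynomially on a basis of $U$. Since both maps are algebraic and mutually inverse, they give the desired isomorphism $Gr_\mathfrak{g}(V^n,V^s) \cong Gr(n,s)$. The only mildly subtle point is checking that $\Phi(U)$ really has the right isomorphism type and that the decomposition is direct; everything else is formal from semisimplicity and Schur's lemma.
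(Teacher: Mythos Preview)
Your proof is correct and follows exactly the same idea as the paper: a subrepresentation of $V^n$ isomorphic to $V^s$ is determined by, and determines, an $s$-dimensional subspace of the $n$-dimensional space of highest weight vectors. The paper's proof is a single sentence to this effect, so your version is simply a more careful elaboration of the same argument, including the explicit inverse maps and the check that the bijection is algebraic.
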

\begin{proof}
A subrepresentation of $V^n$ isomorphic to $V^s$ determines and is determined by a subspace
of dimension $s$ inside the $n$ dimensional vector space of highest weight vectors of $V^n$.\qed
\end{proof}

\begin{cor}
Given $S\subseteq V$ with
$cl(V)=\sum_{i=1}^k n_icl(V_i)$ and $cl(S)=\sum_{i=1}^k s_icl(V_i)$.
$$Gr_{\mathfrak{g}}(V,S)=Gr_{\mathfrak{g}}(V_1^{n_1},V_1^{s_1})\times\ldots\times Gr_{\mathfrak{g}}(V_k^{n_k},V_k^{s_k}).$$
\end{cor}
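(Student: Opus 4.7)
The plan is to exhibit an explicit bijective morphism between $Gr_{\mathfrak{g}}(V,S)$ and the product $\prod_{i=1}^k Gr_{\mathfrak{g}}(V_i^{n_i}, V_i^{s_i})$, and to check it is an isomorphism of varieties. The underlying algebraic fact is that every $\mathfrak{g}$-submodule must respect the isotypic decomposition.

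First I would fix once and for all the isotypic decomposition $V = V^{(1)} \oplus \ldots \oplus V^{(k)}$, where $V^{(i)} \cong V_i^{n_i}$ is the sum of all submodules of $V$ isomorphic to $V_i$. By Schur's lemma (cf.\ \cite[\S 6.1]{MR499562}) there are no nonzero $\mathfrak{g}$-morphisms between $V_i$ and $V_j$ for $i\neq j$, so for any $\mathfrak{g}$-submodule $W \subseteq V$ the projection $\pi_i \colon V \epi V^{(i)}$ carries $W$ into itself in the sense that $W = \bigoplus_i (W \cap V^{(i)})$, and moreover $W \cap V^{(i)}$ is precisely the $V_i$-isotypic component of $W$. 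If in addition $W \cong S = \bigoplus V_i^{s_i}$, uniqueness of the isotypic decomposition forces $W \cap V^{(i)} \cong V_i^{s_i}$, so $W \cap V^{(i)} \in Gr_{\mathfrak{g}}(V^{(i)}, V_i^{s_i})$.

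This suggests the map
$$\Phi \colon Gr_{\mathfrak{g}}(V,S) \longrightarrow \prod_{i=1}^k Gr_{\mathfrak{g}}(V_i^{n_i}, V_i^{s_i}), \qquad W \longmapsto (W \cap V^{(1)}, \ldots, W \cap V^{(k)}),$$
with inverse $(W_1, \ldots, W_k) \mapsto W_1 \oplus \ldots \oplus W_k$. The previous lemma identifies each factor with an ordinary Grassmannian $Gr(n_i, s_i)$, so both sides embed naturally in products of ordinary Grassmannians and one can check that $\Phi$ and its inverse are regular: $\Phi$ is the restriction of the product of linear projections $\pi_i$, and the inverse is the restriction of the direct sum map, both of which are morphisms of the ambient Grassmannian varieties.

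The only mildly delicate point, which I expect to be the main obstacle, is not the set-theoretic bijection but justifying that $\Phi$ is a morphism of varieties rather than merely a bijection of sets: one needs to observe that $W \cap V^{(i)}$ varies algebraically with $W$, which is clear once $W$ is presented as the kernel of the projection $V \epi V/W$ or, equivalently, via the fact that the projectors onto the isotypic components are fixed linear maps, so intersecting with a fixed subspace and taking images under a fixed linear map are morphisms between the relevant (ordinary) Grassmannians. Everything else then reduces to the previous lemma.
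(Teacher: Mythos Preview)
Your argument is correct and matches the paper's approach: the paper simply observes that a subrepresentation is an injection $S\mono V$, which by the earlier $\hom_\mathfrak{g}$ decomposition (Schur's lemma) splits into injections $V_i^{s_i}\mono V_i^{n_i}$, exactly your isotypic decomposition $W=\bigoplus_i(W\cap V^{(i)})$. You add a welcome paragraph on why $\Phi$ is a morphism of varieties, a point the paper leaves implicit.
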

\begin{proof}
Every subrepresentation $S\subseteq V$ is given by a morphism $S\mono V$, so it
decompose as a sum of $V_i^{s_i}\mono V_i^{n_i}$, in other words we have
subrepresentations $V_i^{s_i}\subseteq V_i^{n_i}$.\qed
\end{proof}

\begin{prop}
Given $S\subseteq V$ with
$cl(V)=\sum_{i=1}^k n_icl(V_i)$ and $cl(S)=\sum_{i=1}^k s_icl(V_i)$.
The variety $Gr_{\mathfrak{g}}(V,S)$ is irreducible, smooth and
$$\dim Gr_{\mathfrak{g}}(V,S)=\sum_{i=1}^k (n_i-s_i)s_i.$$
\end{prop}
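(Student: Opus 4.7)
The plan is to reduce the statement directly to the case of a classical Grassmannian by invoking the two results immediately preceding it. The previous corollary gives the product decomposition
$$Gr_{\mathfrak{g}}(V,S)\cong Gr_{\mathfrak{g}}(V_1^{n_1},V_1^{s_1})\times\ldots\times Gr_{\mathfrak{g}}(V_k^{n_k},V_k^{s_k}),$$
and the preceding lemma identifies each factor $Gr_{\mathfrak{g}}(V_i^{n_i},V_i^{s_i})$ with the ordinary Grassmannian $Gr(n_i,s_i)$ via the isomorphism sending a subrepresentation to its space of highest weight vectors.

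From this point the proof is essentially bookkeeping. Each ordinary Grassmannian $Gr(n_i,s_i)$ is a classical smooth, irreducible projective variety of dimension $(n_i-s_i)s_i$, so the product is smooth and irreducible (a finite product of smooth irreducible varieties is smooth and irreducible), and its dimension is the sum of the dimensions of the factors:
$$\dim Gr_{\mathfrak{g}}(V,S)=\sum_{i=1}^k \dim Gr(n_i,s_i)=\sum_{i=1}^k (n_i-s_i)s_i.$$

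There is no real obstacle here: the hard work was done in the previous lemma, which produced the identification with $Gr(n,s)$ by exploiting the one-dimensional highest weight space of an irreducible representation. The only thing worth checking carefully is that the product decomposition in the corollary is genuinely a decomposition of algebraic varieties (and not merely a bijection of point sets), but this is clear because the splitting of $V$ into isotypic components is canonical and an arbitrary subrepresentation $S\subseteq V$ is forced to split as $\bigoplus_i (S\cap V_i^{n_i})$, with each summand lying in a fixed isotypic component. Thus the proof amounts to citing the previous two results and applying standard properties of Grassmannians.
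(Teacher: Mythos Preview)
Your proof is correct and follows the same approach as the paper: both invoke the preceding product decomposition and the lemma identifying each factor with an ordinary Grassmannian, then appeal to the standard fact that $Gr(n,s)$ is smooth, irreducible, and of dimension $(n-s)s$. Your write-up is simply a bit more explicit about why the product inherits smoothness and irreducibility.
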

\begin{proof}
We already know that the variety $Gr_{\mathfrak{g}}(V,S)$ is irreducible and smooth.
The dimension follows from \cite[p.138]{MR1182558},
$$\dim Gr(n,s)=(n-s)s.$$\qed
\end{proof}

\section{The variety $\mathcal{DGLA}(E)$.}

Recall the definition of a DGLA $(E,d,[-,-])$,
\begin{itemize}
\item $[E^i,E^j]\subset E^{i+j}$, $[a,b] = -(-1)^{\overline{a}\overline{b}}[b,a]$
\item $[a,[b,c]]=[[a,b],c]+(-1)^{\overline{a}\overline{b}}[b,[a,c]]$
\item $d(E^i)\subset E^{i+1}$, $d^2=0$, $d[a,b]=[da,b]+(-1)^{\overline{a}}[a,db]$
\end{itemize}
Note that $E^0$ is a Lie algebra, $E^i$ is a module ($i>0$), $d^0$ is a derivation (\cite[7.4.3]{MR1269324})
and that the bracket $E^1\times E^1\rightarrow E^2$ is symmetric.
In this section we are assuming $E=E^0\oplus E^1\oplus E^2$, so
a DGLA structure on $E$ is the data of a semisimple Lie algebra $E^0$,
two modules $E^1$ and $E^2$, two lineal maps $d^0$ and $d^1$ and
finally a symmetric bilinear $E^0$-morphism $f:S^2(E^1)\rightarrow E^2$,
$$E=E^0\oplus E^1\oplus E^2,\quad d^0:E^0\fl{}E^1,\,d^1:E^1\fl{}E^2,\,f:S^2(E^1)\fl{}E^2.$$
They satisfies the following conditions
$$d^0([a,b])=-b.d^0(a) +a.d^0(b),\quad d^1(a.x)=f(d^0a,x)+a.d^1(x),\quad d^1d^0=0.$$
Let's define the variety $\mathcal{DGLA}(E)$,
$$\mathcal{DGLA}(E)=\{(d^0,d^1,f)\,|\,\text{DGLA on } E\}.$$

\begin{notc}
For every $e\in \mathcal{DGLA}(E)$ we have its corresponding Maurer-Cartan variety $M(e)$
$$M(e):=\{x\in E^1\,|\,2d^1(x)+f(xx)=0\}.$$
We are using a lowercase $e$ to distinguish it from the graded vector space $E$.
There are a lot of structures of DGLA in the same space $E$.
\end{notc}

\begin{notc}
Let $\mathfrak{g}$ be a semisimple Lie algebra and let $V$ be a module.
The space of derivations $\der(\mathfrak{g},V)$ is representable by a module $\mathcal{I}$
in such a way that every derivation $d$ corresponds to a $\mathfrak{g}$-morphism $\phi_d:\mathcal{I}\rightarrow V$,
$$\der(\mathfrak{g},V)=\hom_\mathfrak{g}(\mathcal{I},V).$$
In this way we can associate to every derivation the $\mathfrak{g}$-rank of $\phi_d$,
$$\der(\mathfrak{g},V)\rightarrow R(\mathfrak{g}),\quad d\rightarrow \rk_{\mathfrak{g}}(\phi_d).$$
Recall that in the semisimple case all derivations are inner derivations then
$\der(\mathfrak{g},V)=V$. In other words, the notation $V_s$ will mean
$$V_s=\der(\mathfrak{g},V)_s=\{ d\in \der(\mathfrak{g},V)\,|\,\rk_{\mathfrak{g}}(\phi_d)=s\}.$$
\end{notc}

\begin{prop}
We have the following union of irreducibles, smooth subvarieties
$$\mathcal{DGLA}(E)=\bigcup_{0\leq v\leq cl(E^1)}\mathcal{DGLA}(E)_v,$$
where
$$\mathcal{DGLA}(E)_v=\{(d^0,d^1,f)\in\mathcal{DGLA}(E)\,|\,d^0\in E^1_v\},$$
$$\dim\mathcal{DGLA}(E)_v=\dim E^1_v+\dim\hom_{E^0}(E^1/v,E^2)+\dim\hom_{E^0}(S^2(E^1),E^2).$$
\end{prop}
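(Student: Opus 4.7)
The plan is to exhibit $\mathcal{DGLA}(E)_v$ as the total space of an irreducible, smooth fibration $\pi:\mathcal{DGLA}(E)_v\to E^1_v$, $(d^0,d^1,f)\mapsto d^0$, and then to further fiber each $\pi^{-1}(d^0)$ over $\hom_{E^0}(S^2(E^1),E^2)$ via $(d^1,f)\mapsto f$. By the $\mathfrak{g}$-determinantal analysis of the previous section, the base $E^1_v$ is irreducible, smooth, of dimension $\dim E^1_v$, so the task reduces to showing that each $\pi^{-1}(d^0)$ is irreducible and smooth of dimension $\dim\hom_{E^0}(S^2(E^1),E^2)+\dim\hom_{E^0}(E^1/v,E^2)$.

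Fix $d^0\in E^1_v$. Rewrite condition (ii) $d^1(a.x)=f(d^0 a,x)+a.d^1(x)$ as $L(d^1)=F_f$, where $L(\phi)(a,x):=\phi(a.x)-a.\phi(x)$ is the Chevalley--Eilenberg differential $\delta^0$ on the $E^0$-module $\hom(E^1,E^2)$ and $F_f(a,x):=f(d^0 a,x)$. A short computation using the $E^0$-equivariance of $f$ and the derivation property of $d^0$ shows $F_f$ is a $1$-cocycle; since $E^0$ is semisimple, Whitehead's first lemma provides a solution $d^1$, and the space of all solutions is a torsor over $\ker L=\hom_{E^0}(E^1,E^2)$. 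Further imposing (i) $d^1d^0=0$ cuts this torsor down to one modeled on $\{h\in\hom_{E^0}(E^1,E^2):hd^0=0\}=\hom_{E^0}(E^1/v,E^2)$.

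The delicate point is to verify that, for each $f$, this final torsor is actually non-empty: given any $d^1_0$ satisfying (ii), the map $\psi:=d^1_0 d^0:E^0\to E^2$ turns out to be a derivation (this follows from $L(d^1_0)=F_f$ combined with the symmetry of $f$ and the derivation property of $d^0$, with the ``non-cocycle'' terms cancelling), hence by semisimplicity is inner, and can therefore be cancelled by a correction $h\in\hom_{E^0}(E^1,E^2)$ with $hd^0=-\psi$. Granting this, each $\pi^{-1}(d^0)$ is an affine bundle over $\hom_{E^0}(S^2(E^1),E^2)$ with fibers of dimension $\dim\hom_{E^0}(E^1/v,E^2)$, hence is irreducible, smooth, of the required dimension, and summing with $\dim E^1_v$ yields the formula. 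The main obstacle is precisely this non-emptiness step, which is where Whitehead's lemma is invoked crucially; the decomposition $\mathcal{DGLA}(E)=\bigcup_v\mathcal{DGLA}(E)_v$ itself is immediate from the stratification by the $\mathfrak{g}$-rank of $d^0$.
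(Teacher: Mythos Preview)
Your approach is essentially the paper's: both project first to $E^1_v$ via $d^0$, then fiber $\pi^{-1}(d^0)$ over $\hom_{E^0}(S^2(E^1),E^2)$ via $f$, and identify the remaining fiber in $d^1$ with an affine translate of $\hom_{E^0}(E^1/v,E^2)$. The only difference is presentational: where you invoke Whitehead's lemma abstractly, the paper writes $d^0$ explicitly as the inner derivation $a\mapsto a.y$, which converts your cocycle equation $L(d^1)=F_f$ into the concrete statement that $d^1+f(y,-)\in\hom_{E^0}(E^1,E^2)$, and reduces the condition $d^1d^0=0$ (your ``delicate point'') to the single equation $2g(y)=f(yy)$ rather than going through the auxiliary derivation $\psi$.
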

\begin{proof}
The projection $\pi_1:\mathcal{DGLA}(E)_v\fl{}E^1_v$ has fibers
$$\pi_1^{-1}(d^0)=\{(d^1,f)\,|\,d^1(ax)=f(d^0a,x)+ad^1(x),\,d^1d^0=0\}.$$
Let $(d^1,f)\in\pi_1^{-1}(d^0)$ and recall that $f$ is an $E^0$-morphism
and that $d^0$ is an inner derivation (i.e. $d^0=-.y$ for some $y\in E^1$).
Let's analyze the conditions for $(d^1,f)$:
$$d^1(ax)=f(ay,x)+ad^1(x)=af(y,x)-f(y,ax)+ad^1(x)\iff $$
$$ad^1(x)+af(y,x)=d^1(ax)+f(y,ax)\iff
a(d^1+f(y,-))(x)=(d^1+f(y,-))(ax)\iff$$
$$d^1+f(y,-)\in\hom_{E^0}(E^1,E^2).$$
Projecting the variety $\pi_1^{-1}(d^0)$ over its second factor with $p_2$ and taking fiber, we get
$$p_2^{-1}(f)=\{ d^1\,|\,d^1+f(y,-)\in\hom_{E^0}(E^1,E^2),\,d^1d^0=0\}\cong$$
$$\{g\in\hom_{E^0}(E^1,E^2)\,|\,(g-f(y,-))d^0=0\}.$$
Let's rewrite the last condition
$$0=(g-f(y,-))d^0(a)=(g-f(y,-))(ay)=g(ay)-f(y,ay)=$$
$$ag(y)-\frac{a}{2}f(yy)=a(g(y)-\frac{1}{2}f(yy))\quad\forall a\in E^0.$$
Given that $E^0$ is semisimple, the last equation is equivalent to $2g(y)=f(yy)$.
Finally,
$$p_2^{-1}(f)\cong\{g\in\hom_{E^0}(E^1,E^2)\,|\,g(y)=f(yy)\}\cong \hom_{E^0}(E^1/v,E^2).$$
Note that
$$\im(p_2)=\hom_{E^0}(S^2(E^1),E^2).$$\qed
\end{proof}

We will see in the next two lemmas that every Maurer-Cartan variety of a DGLA $F$ with $F^0$ semisimple
appears in a DGLA $e\in\mathcal{DGLA}(E)_0$ where $E=F^0\oplus F^1\oplus \ker d^2$.
Note that $e$ has $d^0=0$.

\begin{lema}
Every degree zero map between two DGLA, $\phi:E\rightarrow F$, induces an equivariant map
$\phi_\star:M(E)\rightarrow M(F)$.
If $\phi_0$, $\phi_1$ are bijective and $\phi_2$ injective, $\phi_\star$ is an equivariant isomorphism.
\end{lema}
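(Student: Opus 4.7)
The plan is to verify both assertions directly from the defining properties of a DGLA morphism: $\phi$ is degree-preserving, commutes with the differentials ($\phi\circ d_E = d_F\circ \phi$), and preserves brackets ($\phi[a,b]_E = [\phi a,\phi b]_F$). First I would set $\phi_\star := \phi_1\big|_{M(E)}$ and check it lands in $M(F)$: applying $\phi_2$ to the Maurer--Cartan equation $d_E x + \tfrac{1}{2}[x,x]_E = 0$ and using the two compatibilities above immediately gives $d_F\phi_1(x) + \tfrac{1}{2}[\phi_1(x),\phi_1(x)]_F = 0$, so $\phi_1(x)\in M(F)$.

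For the equivariance statement, I would work with the infinitesimal gauge action $a\cdot x = -d_E^0 a + [a,x]_E$ of $E^0$ on $M(E)$, and analogously for $F^0$ on $M(F)$. Since $\phi_0$ is a Lie algebra morphism and $\phi_1$ intertwines the two module structures along $\phi_0$, the identity $\phi_1(a\cdot x) = \phi_0(a)\cdot \phi_1(x)$ is formal from $\phi\circ d_E = d_F\circ\phi$ and the bracket compatibility. The main obstacle here is pinning down the group-level gauge action: once one chooses an exponentiation of $E^0$ (available in the paper's setting because $E^0$ is semisimple, hence the Lie algebra of an algebraic group) functoriality of exponentiation applied to $\phi_0$ transfers the infinitesimal equivariance to a genuine equivariance of actions.

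For the second half, suppose $\phi_0, \phi_1$ are bijective and $\phi_2$ is injective. Injectivity of $\phi_\star$ is immediate from injectivity of $\phi_1$. For surjectivity, given $y\in M(F)$ I would define $x := \phi_1^{-1}(y)$ and apply $\phi_2$ to $d_E x + \tfrac{1}{2}[x,x]_E$; the result equals $d_F y + \tfrac{1}{2}[y,y]_F = 0$, and injectivity of $\phi_2$ forces $x\in M(E)$, so $\phi_\star(x)=y$. The inverse $\phi_\star^{-1} = \phi_1^{-1}\big|_{M(F)}$ is then equivariant along $\phi_0^{-1}$ by inverting the equivariance relation of the previous paragraph, completing the proof that $\phi_\star$ is an equivariant isomorphism.
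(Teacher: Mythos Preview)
Your proof is correct and follows the same overall strategy as the paper: verify that $\phi_\star$ lands in $M(F)$ via the DGLA-morphism compatibilities, establish equivariance, and deduce bijectivity from the hypotheses on $\phi_0,\phi_1,\phi_2$ exactly as you do. The only minor variation is that the paper checks equivariance by applying $\phi$ termwise to the explicit exponential series $a\cdot x=\sum_{k\ge 0}\frac{a^k}{k!}\bigl(x-\frac{da}{k+1}\bigr)$, whereas you verify it infinitesimally and then invoke exponentiation; both routes are standard and yield the same conclusion.
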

\begin{proof}
Let $x\in M(E)$ then $2dx+[x,x]=0$. Let's see what happens with $\phi(x)$,
$$2d(\phi(x))+[\phi(x),\phi(x)]=\phi(2dx+[x,x])=0,$$
then we have a well defined map $\phi_\star:M(E)\rightarrow M(F)$.

The simply connected Lie group associated to the Lie algebra $E^0$ acts on $E^1$
preserving the Maurer-Cartan variety. This action is called gauge-action,
$$a\cdot x=\sum_{k=0}^\infty \frac{a^k}{k!}(x-\frac{da}{k+1}).$$
Let's see that $\phi_\star$ preserve this action.
Let $a\in E^0$, $x\in M(E)$ and let $b=\phi_0(a)\in F^0$, $y=\phi_1(x)\in M(F)$.
Given that $\phi$ is a DGLA map, we have $\phi_1(d^0a)=d^0\phi_0(a)=d^0b$ then
$$\phi_\star(a\cdot x)=\phi_\star(\sum_{k=0}^\infty \frac{a^k}{k!}(x-\frac{da}{k+1}))=
\sum_{k=0}^\infty \frac{b^k}{k!}(y-\frac{db}{k+1})=b\cdot y.$$
Finally, suppose that $\phi_1$ is surjective and $\phi_2$ injective,
$$z\in M(F)\then 0=2dz+[z,z]=2d\phi(x)+[\phi(x),\phi(x)]=\phi(2dx+[x,x])\then x\in M(E)$$
then $\phi_\star$ is surjective. If $\phi_1$ is injective, clearly so is $\phi_\star$.\qed
\end{proof}

\begin{lema}
Let $E=E^0\oplus E^1\oplus E^2$ with $E^0$ semisimple and $E^1$, $E^2$ two modules.
For every $e\in\mathcal{DGLA}(E)$ there exist $\widetilde{e}\in\mathcal{DGLA}(E)_0$ such that
$$M(e)\cong M(\widetilde{e}).$$
\end{lema}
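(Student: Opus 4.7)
My plan is to exploit the explicit parametrization of $\mathcal{DGLA}(E)_v$ furnished by the previous proposition and then realize the required isomorphism of Maurer--Cartan varieties as a simple affine translation on $E^1$. Since $E^0$ is semisimple, every derivation $E^0\to E^1$ is inner, so I fix $y\in E^1$ with $d^0(a)=a\cdot y$. Following the analysis of the fiber $\pi_1^{-1}(d^0)$ in the proof of the previous proposition, the element $g:=d^1+f(y,-)$ automatically lies in $\hom_{E^0}(E^1,E^2)$ (this is precisely the content of the compatibility $d^1(ax)=f(d^0 a,x)+a\,d^1(x)$), and the residual condition $d^1 d^0=0$ translates into the constraint $2g(y)=f(y,y)$. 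I then set $\widetilde e:=(0,g,f)$. Checking that $\widetilde e\in\mathcal{DGLA}(E)_0$ is routine: the compatibility for the new $\widetilde d^{\,1}$ collapses to $E^0$-equivariance of $g$, the bracket $f$ is already a symmetric $E^0$-morphism, and the remaining Jacobi and derivation axioms either reduce to equivariance of $f$ or are vacuous because $E^3=0$.

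Next, I will show that the translation $\psi:E^1\to E^1$, $\psi(\tilde x):=\tilde x+y$, restricts to an isomorphism $M(\widetilde e)\to M(e)$. Since $\psi$ is an isomorphism of the ambient affine space with regular inverse $\tilde x\mapsto \tilde x-y$, it suffices to check that $\psi$ carries the defining equation of $M(\widetilde e)$ to that of $M(e)$. The substance is the direct substitution
$$2d^1(\tilde x+y)+f(\tilde x+y,\tilde x+y)=2g(\tilde x)+f(\tilde x,\tilde x)+\bigl(2g(y)-f(y,y)\bigr),$$
where the linear cross term $-2f(y,\tilde x)+2f(\tilde x,y)$ cancels by symmetry of $f$ and the residual constant $2g(y)-f(y,y)$ vanishes thanks to the constraint inherited from $e$. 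Hence $\tilde x\in M(\widetilde e)\iff \psi(\tilde x)\in M(e)$, giving the desired isomorphism.

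The main technical point is to isolate the parametrization $d^1=g-f(y,-)$ from the structural analysis of $\mathcal{DGLA}(E)_v$ and then to observe that the single constraint $2g(y)=f(y,y)$ is exactly what annihilates both the linear cross term and the constant term of the translated Maurer--Cartan equation; the rest of the argument is bookkeeping. Incidentally, $\psi$ also intertwines the two gauge actions, since the infinitesimal gauge action $x\mapsto ax-d^0(a)=a(x-y)$ on $M(e)$ becomes the linear action $\tilde x\mapsto a\tilde x$ on $M(\widetilde e)$ under $\tilde x=x-y$, in the spirit of the preceding lemma.
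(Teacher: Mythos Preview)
Your proof is correct and follows essentially the same route as the paper: both identify $d^0=\mu_y$ for some $y\in E^1$, set $\widetilde e=(0,\,d^1+f(y,-),\,f)$, and verify that the affine translation $\tilde x\mapsto \tilde x+y$ gives the isomorphism $M(\widetilde e)\cong M(e)$, with the constant term killed by the identity $2g(y)=f(y,y)$ (equivalently, $y\in M(e)$) coming from $d^1d^0=0$. The only organizational difference is that the paper first records the general twist $\widetilde e=(d^0-\mu_x,\,d^1+f(x,-),\,f)$ for an arbitrary $x\in M(e)$ and then specializes to $x=y$, whereas you go straight to $y$; your brief remark on gauge equivariance is also handled in the paper by an explicit exponential computation.
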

\begin{proof}
Given $x\in M(e)$ we can construct another DGLA structure on $E$ different from $e=(d^0,d^1,f)$,
$$\widetilde{e}:=(d^0-\mu_x,d^1+f(x,-),f)$$
where $\mu_x$ is the inner derivation $\mu_x(a)=a.x=-[x,a]$.
The Maurer-Cartan variety $M(\widetilde{e})$ is isomorphic to $M(e)$ in an equivariant way:
$$M(\widetilde{e})\fl{}M(e),\quad z\rightarrow z+x.$$
$$(a\cdot_2 z)+x=\sum_{k=0}^\infty \frac{a^k}{k!}(z-\frac{da-ax}{k+1})+x=
\sum_{k=0}^\infty \frac{a^k}{k!}(z-\frac{da}{k+1})
+\sum_{k=0}^\infty \frac{a^k}{k!}\frac{ax}{k+1}+x=$$
$$\sum_{k=0}^\infty \frac{a^k}{k!}(z-\frac{da}{k+1})
+\sum_{k=0}^\infty \frac{a^k}{k!}x=\sum_{k=0}^\infty \frac{a^k}{k!}(z+x-\frac{da}{k+1})=a\cdot_1(z+x).$$
Given that $d^0$ is an inner derivation there exist $y\in E^1$ such that $d^0=\mu_y$,
then if $y\in M(e)$, we can make $\widetilde{e}\in\mathcal{DGLA}(E)_0$.
Let $e=(\mu_y,d^1,f)\in\mathcal{DGLA}(E)$, let's see that $y\in M(e)$,
$$0=d^1\mu_y(a)=d^1(ay)=f(ay,y)+ad^1(y)=$$
$$\frac{1}{2}af(y,y)+ad^1(y)=a(d^1(y)+\frac{1}{2}f(y,y))\quad\forall a\in E^0.$$
Given that $E^0$ is semisimple, $y\in M(e)$.\qed
\end{proof}

\begin{obs}
From the previous two lemmas we know that every Maurer-Cartan variety associated to a DGLA $E$ with
$E^0$ semisimple could be obtained from a structure of the form $e=(0,d^1,f)$.
We want to mention here that $e$ induce an $E^0$-morphism
$$e:E^1\oplus S^2(E^1)\rightarrow E^2,\quad e(x,q)=2d^1(x)+f(q).$$
We already noted before that $f$ is an $E^0$-morphism and in this case, when $d^0=0$, we get that
$d^1$ is also an $E^0$-morphism (see the DGLA conditions). In fact, we have
$$\mathcal{DGLA}(E)_0\cong \hom_{E^0}(E^1\oplus S^2(E^1),E^2).$$
For every submodule isomorphic to $E^2$ inside $E^1\oplus S^2(E^1)$ we will have a
variety of Maurer-Cartan.
\end{obs}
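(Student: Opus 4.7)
The plan is to specialize all the DGLA axioms to the case $d^0=0$ and then package the remaining data $(d^1,f)$ into a single $E^0$-morphism. First I would recall the three compatibilities defining a DGLA structure on $E=E^0\oplus E^1\oplus E^2$ as given in the introduction:
$$d^0([a,b])=-b.d^0(a)+a.d^0(b),\quad d^1(a.x)=f(d^0a,x)+a.d^1(x),\quad d^1d^0=0,$$
together with the Jacobi identity, which for $a\in E^0$ and $x,y\in E^1$ reads $a.f(x,y)=f(a.x,y)+f(x,a.y)$; all other instances of Jacobi land in $E^{\geq 3}=0$ and are automatic. This last identity is exactly the statement that $f\colon S^2(E^1)\to E^2$ is an $E^0$-morphism, which recovers the fact noted in the excerpt.

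Next I would substitute $d^0=0$ into the three compatibilities. The first and third become vacuous. The second reduces to $d^1(a.x)=a.d^1(x)$, i.e.\ $d^1\in\hom_{E^0}(E^1,E^2)$, giving the claim that $d^1$ is an $E^0$-morphism. Since $f$ is already an $E^0$-morphism on $S^2(E^1)$, the assembled map
$$e\colon E^1\oplus S^2(E^1)\longrightarrow E^2,\qquad e(x,q)=2d^1(x)+f(q),$$
is a direct sum of $E^0$-morphisms and hence lies in $\hom_{E^0}(E^1\oplus S^2(E^1),E^2)$.

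Conversely, given any $E^0$-morphism $\varphi\colon E^1\oplus S^2(E^1)\to E^2$, I would split $\varphi=(\varphi_1,\varphi_2)$ with $\varphi_1\in\hom_{E^0}(E^1,E^2)$ and $\varphi_2\in\hom_{E^0}(S^2(E^1),E^2)$, and define $d^0=0$, $d^1=\tfrac{1}{2}\varphi_1$, $f=\varphi_2$. A direct check shows $(0,d^1,f)$ satisfies the three DGLA compatibilities (all three are trivial or reduce to $E^0$-linearity of $d^1$) and the nontrivial Jacobi identity (which is $E^0$-linearity of $f$). This gives the inverse construction and establishes the bijection $\mathcal{DGLA}(E)_0\cong\hom_{E^0}(E^1\oplus S^2(E^1),E^2)$; since both sides are affine, the bijection is a linear isomorphism of varieties.

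The only mildly delicate point is bookkeeping the factor of $2$ in $e(x,q)=2d^1(x)+f(q)$ so that the inverse map and the Maurer--Cartan equation $2d^1(x)+f(xx)=0$ remain consistent; aside from this, the proof is a routine unpacking of the axioms once one observes that setting $d^0=0$ collapses every mixed compatibility to ordinary $E^0$-equivariance and that all higher Jacobi instances vanish for degree reasons in $E=E^0\oplus E^1\oplus E^2$.
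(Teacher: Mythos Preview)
Your proposal is correct and follows exactly the approach the paper intends: the remark itself carries no separate proof beyond pointing to the DGLA conditions, and you have simply spelled out that when $d^0=0$ the derivation compatibility $d^1(a.x)=f(d^0a,x)+a.d^1(x)$ collapses to $E^0$-linearity of $d^1$, while the Jacobi identity for $a\in E^0$, $x,y\in E^1$ gives $E^0$-linearity of $f$. Your explicit inverse construction and the bookkeeping of the factor $2$ are more detail than the paper provides, but the substance is identical.
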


\section*{Acknowledgments.}
I would like to thanks my advisor, Fernando Cukierman for the very useful discussions, ideas and suggestions.
All the ideas presented here were given by him. This work was supported by CONICET, Argentina.
\bibliographystyle{elsarticle-num}
\bibliography{../../citas}

\end{document}